\newtheorem{thm}{Theorem}[section]
\newtheorem{lem}[thm]{Lemma}
\newtheorem{clm}[thm]{Claim}
\newtheorem{prop}[thm]{Proposition}
\theoremstyle{remark}
\newtheorem{rmk}[thm]{Remark}
\theoremstyle{definition}
\newtheorem{Def}[thm]{Definition}                                        %
\def \C {\mathbb C}
\def \U {\mathcal U}
\title{Remarks on the H\"older-continuity of solutions to parabolic equations with conic singularities}
\date{\vspace{-5ex}}
\begin{document}
\maketitle
\begin{abstract} This is  a note on  \cite{LSU} and  \cite{FS}. Using their work line by line, we prove  the H\"older-continuity of solutions to linear parabolic  equations of mixed type, assuming   the coefficient of $\frac{\partial}{\partial t}$ has time-derivative bounded from above.  On a K\"ahler manifold,  this H\"older estimate works when the metrics possess conic singularities along a normal crossing divisor.

\end{abstract}
\section{Introduction}
\indent 

Historically, H\"older-continuity of solutions to linear elliptic and  parabolic equations (in various cases) has been proved and extensively studied  by De Giorgi \cite{DeGiorge}, Nash \cite{Nash}, Moser \cite{Moser}, Krylov-Safonov \cite{KrylovSafonov}. Many other experts have contributed to this topic as well. In addition to the above articles, we also refer the interested readers to \cite{Caffarelli}, \cite{FS}, \cite{GT}, \cite{LSU}, and the references therein.   In \cite{FS}, Safonov-Ferretti give a unified proof of the H\"older-continuity in both the divergence case and non-divergence case. The key is to establish growth properties  for the level sets of the solutions. 
   
    In this note, we focus on the  work in  \cite{FS} on divergence-form equations, and the related work in \cite{LSU}. The operator in equation (\ref{equ considered by FS}) is exactly the one considered in \cite{FS}. The little difference is: $a_{0}$ in \cite{FS} is not allowed to depend on time (line 11 in page 89), but here \textbf{we allow $a_{0}$ to depend on time}. 

   The motivation of us is to study the heat equation associated with a Ricci flow.  The Ricci flow is a special time-parametrized family  of Riemannian metrics $g(t)$. Given a time-family  of Riemannian metrics $g(t)$ over a Euclidean ball $B$, the heat equation of this family reads as 
   \begin{equation}\label{equ heat wrt Riemannian metrics}
 \frac{\partial u}{\partial t}-\Delta_{g}u\triangleq  \frac{\partial u}{\partial t}-\frac{1}{\sqrt{detg_{ij}}}\frac{\partial}{\partial x_{i}}(g^{ij}\sqrt{detg_{ij}}\frac{\partial u}{\partial x_{j}})  =f, 
\end{equation}   
where $x_{i}$'s are the Euclidean coordinates. To estimate  the H\"older norm of $u$, we only care about the $L^{\infty}-$norm of $f$, though we can assume that everything involved  have higher derivatives.  Multiplying (\ref{equ heat wrt Riemannian metrics}) by $\sqrt{detg_{ij}}$, we get 
  \begin{equation}\label{equ correspondence heat wrt Riemannian metrics}
  \sqrt{detg_{ij}} \frac{\partial u}{\partial t}-\frac{\partial}{\partial x_{i}}(g^{ij}\sqrt{detg_{ij}}\frac{\partial u}{\partial x_{j}})  =F= f\sqrt{detg_{ij}}.
\end{equation}  
Let $a_{0}=\sqrt{detg_{ij}} $ and $a^{ij}=g^{ij}\sqrt{detg_{ij}}$,  (\ref{equ heat wrt Riemannian metrics}) is a special case of  (\ref{equ considered by FS}) and  equation (D) in page 89 of \cite{FS}. Suppose  $detg_{ij}$ is uniformly bounded, the  $L^{\infty}-$norm of $f$ is equivalent to the   $L^{\infty}-$norm of $F$, thus it makes no difference for the H\"older estimate. 

 Our main  observation (and a one sentence proof of Theorem \ref{thm Harnack inequality parabolic}) is that \textbf{when $a_{0}$ depends on time and  $\frac{\partial \log {a_{0}}}{\partial t}$ is bounded from above,   the general energy estimates  are still true} (Lemma \ref{lem: parabolic inequality holds in the right way}).    By the  proof in  \cite{FS}, these energy estimates  imply the main growth theorem (Theorem 5.3)  in \cite{FS}.   Moreover, by an idea in \cite{LSU},  Theorem 5.3 in \cite{FS}  directly implies the H\"older continuity of solutions, without involving the Harnack inequality in Theorem 1.5 of \cite{FS}.  We believe these are  known by experts. When $g_{t}$ is a  Ricci flow, the upper bound on $\frac{\partial \log {a_{0}}}{\partial t}$    means 
 \begin{equation}\label{equ condition upper bound of volume form derivative}
 \frac{\partial }{\partial t}dvol_{g_t}\leq Kdvol_{g_t}
 \end{equation} for some constant $K>0$, where $dvol_{g_t}$ is the evolving volume form.  The $K$ is actually a lower bound for the scalar curvature of $g_{t}$. Fortunately, the scalar curvature is usually bounded from below along Ricci flows without any additional condition, see \cite{Cao} (page 5) and \cite{Hamilton}.
 
 The simplest version of our main theorem is stated as follows. Let $Y=(y,s)$ be a space-time point, and $C_{r}(Y)=B_{y}(r)\times (s-r^{2},s)$ be the parabolic cylinder centred at $Y$ with  radius $r$ [$B_{y}(r)$ is the usual $m-$dimensional Euclidean ball].
\begin{thm}\label{thm Harnack inequality parabolic} Suppose  $u\in C^{\infty}[C_{r}(Y)]$ solves the following equation (or the metric heat equation (\ref{equ heat wrt Riemannian metrics}) via the correspondence in (\ref{equ correspondence heat wrt Riemannian metrics}) )  in the classical sense 
\begin{equation}\label{equ considered by FS}
a_{0}\frac{\partial u}{\partial t}-\frac{\partial }{\partial x_{j}}(a^{ij}\frac{\partial u}{\partial x_{i}})=f, 
\end{equation}
where $a_{0}$, $a^{ij}$ ($1\leq i,j\leq m$) are  space-time smooth functions. Suppose 
\begin{equation}\label{equ crucial upper bound on the vol form derivative}\frac{1}{K}\leq a_{0}\leq K,\ \frac{\partial \log a_{0}}{\partial t}\leq K,\ \frac{I}{K}\leq a^{ij}\leq KI. 
\end{equation}
Then there exist constants $\alpha(m,K)\in (0,1)$ and $N(m,K)$ such that 
\[ r^{\alpha}{[u]}_{\alpha, C_{\frac{r}{2}}(Y)}+|u|_{L^{\infty}[C_{\frac{r}{2}}(Y)]}\leq N(\frac{|u|_{L^{1}[C_{r}(Y)]}}{r^{m+2}}+r^{2}|f|_{L^{\infty}[C_{r}(Y)]}).\]
\end{thm}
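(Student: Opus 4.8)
I would run the De Giorgi--Nash--Moser machinery in the divergence form developed in \cite{FS}, combined with the device of \cite{LSU} for extracting H\"older continuity directly from the growth estimates; the only point needing genuine care is that the basic energy inequalities survive the time-dependence of $a_0$, and this is precisely where the hypothesis $\partial_t\log a_0\le K$ of (\ref{equ crucial upper bound on the vol form derivative}) is used.

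The first step is the parabolic Caccioppoli inequality asserted in Lemma \ref{lem: parabolic inequality holds in the right way}. Testing (\ref{equ considered by FS}) against $(u-k)_+\eta^2$, with $\eta$ a space-time cutoff vanishing on the parabolic boundary of the working cylinder, and using the identity
\[
\int_B a_0\,\partial_t u\,(u-k)_+\eta^2\,dx
=\frac{d}{dt}\int_B \tfrac12 a_0(u-k)_+^2\eta^2\,dx
-\int_B \tfrac12(\partial_t a_0)(u-k)_+^2\eta^2\,dx
-\int_B \tfrac12 a_0(u-k)_+^2\,\partial_t\eta^2\,dx,
\]
one sees that the only term that could have the wrong sign is $-\tfrac12\int(\partial_t a_0)(u-k)_+^2\eta^2$. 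But $\partial_t a_0=a_0\,\partial_t\log a_0\le Ka_0\le K^2$ by (\ref{equ crucial upper bound on the vol form derivative}), so after integration in time it contributes at most $\tfrac{K^2}{2}\iint(u-k)_+^2\eta^2$ to the right-hand side --- a quantity that is of lower order under parabolic rescaling and hence harmless. Together with $\tfrac1K I\le(a^{ij})\le KI$ and Cauchy--Schwarz on the remaining terms, and the same computation run for $-u$, this yields the standard parabolic energy inequalities at every level $k$, with only an extra $\iint(u-k)_+^2\eta^2$ term on the right. \emph{Had $\partial_t\log a_0$ been unbounded above, $\partial_t a_0$ could be arbitrarily large and positive and this step would break down} --- this is the ``one sentence'' observation advertised in the introduction.

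With these inequalities, $\pm u$ lie in the parabolic De Giorgi class of (\ref{equ considered by FS}), with data controlled by $m$, $K$, $|f|_{L^\infty}$ and the size of the cylinder. From here two facts follow, by the De Giorgi truncation iteration and by reading \cite{FS} line by line: \emph{(i)} local boundedness, in the scale-invariant form $|u|_{L^\infty[C_{r/2}(Y)]}\le N\big(r^{-(m+2)}|u|_{L^1[C_r(Y)]}+r^2|f|_{L^\infty[C_r(Y)]}\big)$ --- first proved with the $L^2$ norm of $u$ on the right and then self-improved to the $L^1$ form by the usual interpolation over a chain of nested cylinders; and \emph{(ii)} the growth theorem, Theorem 5.3 of \cite{FS}: a suitably normalised nonnegative supersolution that exceeds $1$ on a fixed fraction of a lower sub-cylinder is bounded below by $\theta(m,K)>0$ on a smaller upper sub-cylinder, modulo an additive correction of size $\lesssim r^2|f|_{L^\infty}$. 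The only thing to verify for \emph{(ii)} is that the proof in \cite{FS} tolerates the extra zeroth-order term produced by the first step, which it does: under parabolic scaling that term carries an additional factor of $r^2$ and is absorbed into the free data exactly as the $f$-term is.

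Finally, for the H\"older estimate I would use the idea of \cite{LSU} to bypass the Harnack inequality (Theorem 1.5 of \cite{FS}). Writing $\omega(\rho)=\operatorname{osc}_{C_\rho(Y)}u$ and fixing a ratio $\theta_0\in(0,1)$, a median argument shows that at each scale at least one of $\tfrac{2(u-\inf_{C_\rho}u)}{\omega(\rho)}$ and $\tfrac{2(\sup_{C_\rho}u-u)}{\omega(\rho)}$ exceeds $1$ on at least half of the lower sub-cylinder of $C_\rho(Y)$; applying Theorem 5.3 of \cite{FS} to that function gives the one-step oscillation decay $\omega(\theta_0\rho)\le\gamma\,\omega(\rho)+C\rho^2|f|_{L^\infty}$ with $\gamma=\gamma(m,K)<1$. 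The standard iteration lemma then upgrades this to $\omega(\rho)\le N(\rho/r)^{\alpha}\big(\omega(r)+r^2|f|_{L^\infty[C_r(Y)]}\big)$ for some $\alpha=\alpha(m,K)\in(0,1)$, which controls $r^{\alpha}[u]_{\alpha,C_{r/2}(Y)}$; bounding $\omega(r)\le 2|u|_{L^\infty[C_r(Y)]}$ and adding \emph{(i)} gives the two terms on the left of the claimed inequality. The one real obstacle is the very first step --- making sure the $\partial_t a_0$ term genuinely has the good sign and does not degrade the constants imported from \cite{FS} and \cite{LSU} --- and, as the introduction notes, that obstacle dissolves in one sentence once the hypothesis on $\partial_t\log a_0$ is in place.
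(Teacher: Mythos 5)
Your proposal is correct and follows essentially the same route as the paper: identify the Caccioppoli/energy inequality as the lone place where the hypothesis $\partial_t\log a_0\le K$ is needed (that is exactly the content of Lemma \ref{lem: parabolic inequality holds in the right way}), observe that this inequality is the only input the \cite{FS} machinery requires to produce both the interior $L^\infty$ bound (Proposition \ref{prop Moser iteration equivalent to growth lemma}) and the growth theorem (Theorem \ref{thm growth theorem}), and then close via the \cite{LSU}-style median/oscillation-decay argument to get H\"older continuity without invoking the Harnack inequality. The only cosmetic differences from the paper are local: the paper obtains the $L^1\to L^\infty$ bound via the growth lemma and a weighted-maximum argument (Theorem 3.4 of \cite{FS}) rather than a De Giorgi truncation plus $L^2\to L^1$ interpolation, and the paper runs the oscillation decay by applying Theorem \ref{thm growth theorem} to $\bar u=u-t|f|_{0}$ shifted by the median rather than to the normalized $2(u-\inf u)/\omega$; both choices are standard and equivalent in content.
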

\begin{rmk}The $[\ \cdot \ ]_{\alpha}$ is the \textbf{parabolic} H\"older semi-norm of exponent $\alpha$ (see (4.1) in \cite{Lieberman} for  definition). \textbf{Theorem \ref{thm Harnack inequality parabolic} can be generalized to heat equations of K\"ahler-metrics  with conic singularities along normal-crossing divisors} (Theorem \ref{thm conic Harnack inequality parabolic}). \textbf{We only prove Theorem \ref{thm conic Harnack inequality parabolic}},   the proof of Theorem \ref{thm Harnack inequality parabolic} is the same (by 
discarding the  necessary techniques for the conic singularities, see Claim \ref{equ L2 norm of gradient of Berdtsson cutoff function vanishes} for example).   We check routinely in Section \ref{Section proof of the Growth theorem} that \textbf{the only usage of the equation in proving Theorem \ref{thm growth theorem} is  the energy estimate}  (for all  test functions, at  all levels and in all scales). \end{rmk}
\begin{rmk}When $\frac{\partial \log a_{0}}{\partial t}$ is not uniformly bounded from above (while the other conditions in Theorem \ref{thm Harnack inequality parabolic} hold true), the above uniform H\"older estimate does not hold in general. We refer the interested readers to   the beautiful example constructed by Chen-Safonov  (Theorem 4.1 and 4.2 in \cite{ChenSafonov}). 
\end{rmk}
\textbf{Acknowledgement}: The author is grateful to Gong Chen for valuable communications. 

\section{The more general version of Theorem \ref{thm Harnack inequality parabolic} in K\"ahler geometry involving conic singularity}

 In K\"ahler geometry setting, Theorem \ref{thm Harnack inequality parabolic}  holds even when the metrics possess conic singularities along analytic hyper surfaces. To state the result,  we first give a geometric formulation  following \cite{Guenancia}. Given a closed K\"ahler manifold $M$ and a divisor $D=\Sigma_{j=1}^{N}2\pi(1-\beta_{j})D_{j}$, where each  $D_{j}$ is an irreducible hyper surface and  may have self-intersection, suppose  $D$ has (no worse than) normal crossing singularities i.e there is  an open cover of $supp D$ by neighbourhoods $\U_{i}$ such that in each $\U_{i}$, $supp D\cap \U_{i}=\{z_{1} z_{2}z_{3}\cdot\cdot\cdot z_{k}=0\}$, where $k\leq n$ and $z_{1}...z_{n}$  are holomorphic coordinate functions  in $\U_{i}$.  A K\"ahler metric $g$ (defined away from $supp D$) is said to be a weak-conic  metric with quasi-isometric constant $K$, iff  it's H\"older-continuous away from $supp D$ and in each $\U_{i}$,   
     \begin{equation}\label{equ gbeta k}
\frac{g^{k}_{\beta}}{K}\leq g\leq Kg^{k}_{\beta}\ (\textrm{quasi-isometric}),\ g^{k}_{\beta}=\Sigma_{j=1}^{k}\frac{\beta_{j}^{2}}{|z_{j}|^{2-2\beta_{j}}}dz_{j}\otimes d\bar{z}_{j}+\Sigma_{j=k+1}^{n}dz_{j}\otimes d\bar{z}_{j}.
\end{equation}
$g_{\beta}^{k}$ is one of the 2  model metrics  on $\C^{n}$ we work with, and in this local setting we abuse notation by denoting $supp D$ as $D$.
 
Similarly, a K\"ahler metric $g$ is called a $\epsilon$-nearly-conic  metric with quasi-isometric constant $K$, iff  it's H\"older-continuous over the whole $M$ (across $supp D$) and in each $\U_{i}$, 
 \begin{equation}\label{equ gbeta k epsilon}
\frac{g^{k}_{\beta,\epsilon}}{K}\leq g\leq Kg^{k}_{\beta,\epsilon},\ 
g^{k}_{\beta,\epsilon}=\Sigma_{j=1}^{k}\frac{\beta_{j}^{2}}{(|z_{j}|^2+\epsilon^2)^{1-\beta_{j}}}dz_{j}\otimes d\bar{z}_{j}+\Sigma_{j=k+1}^{n}dz_{j}\otimes d\bar{z}_{j},\ \epsilon>0.
\end{equation}
  We recall the well known intrinsic  polar coordinates of $g^{k}_{\beta}$. Let $\xi_{j}=r_{j}e^{\sqrt{-1}\theta_{j}},\ r_{j}=|z_{j}|^{\beta_{j}},\ 1\leq j\leq k$.  In these polar  coordinates 
the model cone  $g^{k}_{\beta}$  is equal to  
\[g^{k}_{\beta}=\Sigma_{j=1}^{k}(dr_{j}^2+\beta_{j}^2r_{j}^2d\theta_{j}^2)+\Sigma_{j=k+1}^{n}dz_{j}\otimes d\bar{z}_{j},\]
and it's quasi-isometric to  the Euclidean metric i.e 
\begin{equation}\label{equ conic model metric quasi-isometric to Euclidean metric}
(\min_{j}\beta_{j}^{2})g_{E}\leq g^{k}_{\beta}\leq g_{E},\ g_{E}=\Sigma_{j=1}^{k}(dr_{j}^2+r_{j}^2d\theta_{j}^2)+\Sigma_{j=k+1}^{n}dz_{j}\otimes d\bar{z}_{j}.
\end{equation} 
This is important because we want to take advantage of the rescaling and translation invariance of the Euclidean metric. 

Similarly, we also have intrinsic polar coordinates for $g^{k}_{\beta,\epsilon}$. Let $s_{j}$ be the solution to 
\begin{equation}\label{equ ODE for s}
\frac{ds_{j}}{d\rho_{j}}=\frac{\beta_{j}}{(\rho_{j}^2+\epsilon^2)^{\frac{1-\beta_{j}}{2}}},\ s_{j}(0)=0,\ \rho_{j}=|z_{j}|.
\end{equation}
Then $\xi_{j}=s_{j}e^{\sqrt{-1}\theta_{j}},\ 1\leq j\leq k$ defines the polar coordinates of $g^{k}_{\beta,\epsilon}$. By Lemma 4.3 in \cite{WYQWF}, in these coordinates we have 
\begin{equation}
g^{k}_{\beta,\epsilon}=\Sigma_{j=1}^{k}(ds_{j}^2+a_{j,\epsilon}s_{j}^2d\theta_{j}^2)+\Sigma_{j=k+1}^{n}dz_{j}\otimes d\bar{z}_{j},\  \beta_{j}^{2}<a_{j,\epsilon}\leq 1. 
\end{equation}
Hence $g^{k}_{\beta,\epsilon}$ is also quasi-isometric to the Euclidean metric in its polar coordinate i.e 
\begin{equation}\label{equ nearly-conic model metric quasi-isometric to Euclidean metric}
(\min_{j}\beta_{j}^{2})g_{E}\leq g^{k}_{\beta,\epsilon}\leq g_{E},\ g_{E}=\Sigma_{j=1}^{k}(ds_{j}^2+s_{j}^2d\theta_{j}^2)+\Sigma_{j=k+1}^{n}dz_{j}\otimes d\bar{z}_{j}.
\end{equation}   
   
Unless specified (via a parentheses or a sub-symbol),  the constants  $N$ and $C$ in this article  depend  on (at most) $n,K,M,D,\beta_{j}'s$, and  the open cover $\cup_{i}\U_{i}$. They \textbf{don't depend  on $\epsilon$}. Different $N's$ could be different.  \textbf{The real dimension  is $m=2n$ in the K\"ahler setting.} 
\begin{thm}\label{thm conic Harnack inequality parabolic}Let $\epsilon \in [0,1]$. 

Part $I$ (local estimate):  Suppose  $g_t$ is
a time-differentiable family of weak-conic K\"ahler metrics or of $\epsilon$-nearly-conic  metrics, which is defined over a parabolic cylinder $C_{r}(Y)$ in $\C^{n}$ under a polar coordinate as below    (\ref{equ gbeta k epsilon}) or (\ref{equ ODE for s}), respectively. Suppose   the  quasi-isometric constant of $g_t$ is  $K$, 
\begin{equation}\label{equ conditions of the metrics}  \frac{\partial }{\partial t}dvol_t\leq Kdvol_t.
\end{equation}
and  $u$ is  a bounded weak solution to  
\begin{equation}\label{equ conic heat equ}\frac{\partial u}{\partial t}
=\Delta_{g_{t}}u+f\ \textrm{over}\ C_{r}(Y).
\end{equation}
 Then there exists $\alpha(n,,\beta,K)\in (0,1)$ and $N(n,\beta,K)$ such that 
\[ r^{\alpha}{[u]}_{\alpha, C_{\frac{r}{2}}(Y)}+|u|_{L^{\infty},C_{\frac{r}{2}}(Y)}\leq N(\frac{|u|_{L^{1}[C_{r}(Y)]}}{r^{2n+2}}+r^{2}|f|_{L^{\infty}[C_{r}(Y)]}).\]

Part $II$ (global estimate) In the setting of (\ref{equ gbeta k}) and paragraph above it, suppose all the conditions in part $I$ hold globally on  $M\times [0,T]$. Then for all $t_{0}\in (0,T)$, there exists an $\alpha(n,\beta,K)$ and $C_{t_{0}}(n,\beta,K)$ such that 
\[{[u]}_{\alpha,M\times [t_{0},T]}+|u|_{L^{\infty}(M\times [t_{0},T])}\leq C_{t_{0}}(|u|_{L^{1}(M\times [0,T])}+|f|_{L^{\infty}(M\times [0,T])}).\]
\end{thm}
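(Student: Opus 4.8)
The plan is to follow the strategy outlined in the introduction: reduce the H\"older estimate entirely to the growth theorem (Theorem \ref{thm growth theorem}), which in turn rests only on the energy estimate (Lemma \ref{lem: parabolic inequality holds in the right way}). The point of the hypothesis $\frac{\partial}{\partial t}dvol_t\leq K\,dvol_t$ is that the "bad" sign in the time-derivative of $a_0=\sqrt{\det g_{ij}}$ (in the conic setting, the relevant volume density) only contributes a term $\int \frac{\partial \log a_0}{\partial t}\,a_0 u^2 \leq K\int a_0 u^2$, which is absorbed by Gronwall. So the first step is to verify that, after multiplying (\ref{equ conic heat equ}) by $dvol_t$ and localizing with Berndtsson-type cutoffs adapted to the conic geometry (the cutoffs whose gradient $L^2$-norm controls are the content of Claim \ref{equ L2 norm of gradient of Berdtsson cutoff function vanishes}), the equation takes the divergence form (\ref{equ considered by FS}) with coefficients satisfying (\ref{equ crucial upper bound on the vol form derivative}). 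Here one uses the quasi-isometry (\ref{equ gbeta k}) / (\ref{equ gbeta k epsilon}) together with the comparison (\ref{equ conic model metric quasi-isometric to Euclidean metric}) / (\ref{equ nearly-conic model metric quasi-isometric to Euclidean metric}) to pass to the Euclidean background and exploit its scaling and translation invariance; the $\epsilon$-uniformity of all constants is exactly what (\ref{equ nearly-conic model metric quasi-isometric to Euclidean metric}) gives.

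For Part $I$, once the energy inequalities hold at all levels and in all scales, Section \ref{Section proof of the Growth theorem} shows that the growth theorem of \cite{FS} (their Theorem 5.3) goes through verbatim, since its proof only ever uses the equation through those energy estimates. Then, following the idea in \cite{LSU}, one deduces the decay of oscillation directly: apply the growth theorem to $u - \sup_{C_{\rho}}u$ and to $\inf_{C_\rho}u - u$ on a sequence of shrinking parabolic cylinders $C_{2^{-j}r}(Y)$, obtaining $\mathrm{osc}_{C_{2^{-(j+1)}r}}u \leq \theta\,\mathrm{osc}_{C_{2^{-j}r}}u + C 2^{-j}r^2|f|_{L^\infty}$ for some $\theta=\theta(n,\beta,K)<1$; iterating gives the parabolic H\"older seminorm bound with exponent $\alpha = \log_2(1/\theta)$, and the $L^\infty$ bound is the standard local boundedness half of the same machinery (De Giorgi iteration, again driven only by the energy estimate). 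The $L^1$ norm on the right-hand side, rather than $L^2$, is obtained as usual by interpolating the local $L^\infty$ bound against itself.

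For Part $II$ (global estimate on $M\times[t_0,T]$), I would cover $M$ by finitely many of the coordinate charts $\U_i$ (plus charts away from $supp D$, where the metric is merely H\"older and the classical \cite{LSU} theory applies directly), choose $r$ comparable to the Lebesgue number of this cover and to $\sqrt{t_0}$, and apply Part $I$ in each chart with centers $Y$ ranging over $M\times[t_0,T]$; finiteness of the cover turns the local estimates into a global one, with $C_{t_0}$ depending on $t_0$ through the lower bound on the admissible radius $r\sim\min(1,\sqrt{t_0})$ (this is why the estimate degenerates as $t_0\to 0$). The H\"older seminorm across overlapping charts is controlled because the parabolic distance is comparable in all of them (all model metrics are uniformly quasi-isometric to Euclidean), so a standard chaining argument patches the local seminorms.

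The main obstacle, and the only place real work beyond bookkeeping is needed, is Step one in the conic case: verifying that the Berndtsson-type cutoff functions do localize the energy estimate without the conic singularity contributing an uncontrolled term — i.e.\ that $\int |\nabla \chi|^2_{g_t}$ over the singular region can be made negligible (Claim \ref{equ L2 norm of gradient of Berdtsson cutoff function vanishes}), uniformly in $\epsilon$. This requires choosing $\chi$ to be radial in the $\xi_j$-variables near $D_j$ and using that the conic model has finite volume but a logarithmically divergent capacity that is nonetheless killed by the $\beta_j^2$ weights; the $\epsilon$-regularized metric must be handled so that the constants do not blow up as $\epsilon\to 0$, which is precisely what the ODE (\ref{equ ODE for s}) and the bound $\beta_j^2<a_{j,\epsilon}\leq 1$ are set up to guarantee. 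Everything downstream of the energy estimate is then a line-by-line citation of \cite{FS} and \cite{LSU}.
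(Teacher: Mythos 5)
Your overall architecture matches the paper's: establish the energy inequality for weak sub-solutions via the Berndtsson cutoff trick (Lemma~\ref{lem: parabolic inequality holds in the right way} together with Claim~\ref{equ L2 norm of gradient of Berdtsson cutoff function vanishes}), observe that the growth theorems of Ferretti--Safonov (culminating in Theorem~\ref{thm growth theorem}) use the equation only through these energy estimates, and then follow \cite{LSU} to pass from the growth theorem to oscillation decay and hence the H\"older seminorm, with the $L^\infty$-to-$L^1$ improvement coming from the growth lemma. Your treatment of Part~II by a covering argument is reasonable and in fact goes beyond the paper, which only proves Part~I explicitly.

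However, there is a genuine gap in the step ``apply the growth theorem to $u-\sup_{C_\rho}u$ and to $\inf_{C_\rho}u-u$.'' Both of these functions are nonpositive, so the measure hypothesis $\bigl|\{\cdot>0\}\cap C_\rho\bigr| \le \tfrac12 |C_\rho|$ in Theorem~\ref{thm growth theorem} holds vacuously and the conclusion $\sup_{C_{\rho/2}}\le (1-\lambda)\sup_{C_\rho}(\cdot)^+$ is the trivial statement $0\le 0$, which yields no oscillation decay. The missing ingredient is the dichotomy (Cases~1 and~2 in the paper's Section~3, and in \cite{LSU}): after normalizing $0\le u\le h=\mathrm{osc}_{C_2}u$, at least one of the sets $\{u>h/2\}$ and $\{u<h/2\}$ has measure at most $\tfrac12|C_1^0|$ in the shifted cylinder $C_1^0$, and one then applies the growth theorem to $\bar u - h/2 - 4|f|_{0}$ or to $h-\bar u - h/2 - 4|f|_{0}$ accordingly, where $\bar u=u - t|f|_{0}$ is the corrected sub-solution absorbing the inhomogeneity $f$. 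This is the only place the measure hypothesis of Theorem~\ref{thm growth theorem} is verified rather than assumed, and without it the reduction does not close. A secondary, more cosmetic, difference: the interior $L^\infty$ bound with $L^1$ data does not need a separate De Giorgi/Moser iteration plus interpolation; in the paper it comes in one stroke from Proposition~\ref{prop Moser iteration equivalent to growth lemma}, whose proof is itself a direct consequence of the growth Lemma~\ref{prop growth thm 1} via a maximal-point argument, again using nothing but the energy estimates.
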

\begin{rmk} When the divisor is smooth, a weaker version of  this H\"older estimate is  in section 4 of \cite{WYQWF}. We hope it's still somewhat valuable to present the proof  separately  here.  The ${[u]}_{\alpha}$ is the usual parabolic H\"older semi-norm with respect to $g^{k}_{\beta}$ ($g^{k}_{\beta,\epsilon}$)  [see (\ref{equ conic model metric quasi-isometric to Euclidean metric})]. An important point is that H\"older continuity with respect to the distance of $g^{k}_{\beta}$ ($g^{k}_{\beta,\epsilon}$) is equivalent to H\"older continuity in the usual sense in holomorphic coordinates (apart from a difference of H\"older exponents). We refer interested readers to Lemma 4.4 in \cite{WYQWF}. Please see Definition \ref{Def weak solution} for definition of weak solutions (replace $SC_{r}$ by the underlying domain).
\end{rmk}
\begin{rmk}Using the K\"ahler structure, equation  (\ref{equ conic heat equ}) can be written as both  divergence and non-divergence form. We  expect that Theorem \ref{thm conic Harnack inequality parabolic} still holds without condition (\ref{equ conditions of the metrics}).
\end{rmk}

\section{Proof of the main results assuming  Theorem \ref{thm growth theorem}.}
From now on (and in the subsequent sections), we work in the polar coordinates in (\ref{equ conic model metric quasi-isometric to Euclidean metric}) and (\ref{equ nearly-conic model metric quasi-isometric to Euclidean metric}). \textbf{In this coordinate, we don't see the conic singularity} (except that the coefficients of the equations and solutions are not defined on $D$). Let $C^{0}_{r}$ denote $C_{r}(y,s-3r^{2})$ (see the paragraph above Theorem \ref{thm Harnack inequality parabolic}).
 \begin{proof}{of Theorem  \ref{thm Harnack inequality parabolic}, \ref{thm conic Harnack inequality parabolic}:}  We only prove (part $I$ of) Theorem \ref{thm conic Harnack inequality parabolic} as mentioned at the end of the introduction. Notice that $y$ does not have to be in $supp D$ (as long as integration by parts is true, see proof of Lemma \ref{lem: parabolic inequality holds in the right way}). By the interior $L^{\infty}-$estimate in Proposition \ref{prop Moser iteration equivalent to growth lemma} \textbf{which holds for every cylinder and every sub-solution}, it suffices to show the H\"older norm is bounded by the $L^{\infty}-$norm i.e.
 \begin{equation}\label{equ in proof of Main Thm }r^{\alpha}{[u]}_{\alpha, C_{\frac{r}{2}}(Y)}\leq N(|u|_{L^{\infty}[C_{r}(Y)]}+r^{2}|f|_{L^{\infty}[C_{r}(Y)]}).
\end{equation}
 By Lemma 4.6 in \cite{Lieberman}, it suffices to show the oscillation decays \textbf{for every cylinder $C_{2r}$ and every sub-solution $u$} i.e.
  \begin{equation}\label{equ osc decreasing}
osc_{C_{r}}u\leq (1-b)osc_{C_{2r}}u+4r^{2}|f|_{0,C_{2r}}\,\ b=b(n,\beta,K)>0. 
\end{equation}
By rescaling and translation invariance,  it suffices to assume $r=1,\ s=0$. By adding a constant, it suffices to assume $0\leq u\leq h$, where  $h\triangleq osc_{C_{2}}u$. As in \cite{LSU},  one of the following must hold:\\
 Case 1:  $|\{u> \frac{h}{2}\}\cap C^{0}_{1}|\leq \frac{|C^{0}_{1}|}{2}$\ ;\ \ \ \ \
Case 2:  $|\{u<\frac{h}{2}\}\cap C^{0}_{1}|\leq  \frac{|C^{0}_{1}|}{2}$.

We only prove (\ref{equ osc decreasing}) in  Case 1 in detail. Case 2 is similar by applying the proof in Case 1 to  $h-u$. Consider $\bar{u}=u-t|f|_{0,C_{2r}}$ [$t\in (-4,0)$]. Then 
  \begin{equation}\label{equ 2 in proof of Main Thm}
  \frac{\partial \bar{u}}{\partial t}-\Delta_{g}\bar{u}\leq 0,\  0\leq \bar{u}\leq h+4|f|_{0,C_{2}}.\
  \textrm{Moreover},\ \bar{u}>\frac{h}{2}+4|f|_{0,C_{2}}\Rightarrow u>\frac{h}{2}. 
  \end{equation}
Hence the assumption of Case 1 implies  
$$|\{\bar{u}\geq \frac{h}{2}+4|f|_{0,C_{2}}\}\cap C^{0}_{1}|\leq |\{u> \frac{h}{2}\}\cap C^{0}_{1}| \leq \frac{|C^{0}_{1}|}{2}.$$ Then Theorem \ref{thm growth theorem} (applied to $\bar{u}-\frac{h}{2}-4|f|_{0,C_{2}}$), (\ref{equ 2 in proof of Main Thm}), and the above inequality imply that there exists $a(n,\beta,K)>0$ such that 
\begin{eqnarray*}\sup_{C_{1}}(\bar{u}-\frac{h}{2}-4|f|_{0,C_{2}})
\leq  (1-a)\sup_{C_{2}}(\bar{u}-\frac{h}{2}-4|f|_{0,C_{2}})\leq \frac{(1-a)h}{2}.
\end{eqnarray*}
Then $osc_{C_{1}}\leq \sup_{C_{1}}u \leq \sup_{C_{1}}\bar{u}\leq (1-\frac{a}{2})h+ 4|f|_{0,C_{2}}$. The proof of (\ref{equ osc decreasing}) (under the normalization conditions below it)  is complete.
\end{proof}
\section{Energy inequalities}
We follow closely the definitions and tricks in \cite{FS}, the point is that \textbf{they work equally well in the presence of  conic singularity} (Definitions \ref{Def slant cylinder}, \ref{Def weak solution}, and \ref{Def the cutoff trick by G} ). \textbf{The functions and integrations are all defined away from the divisor $D$} [see the content below (\ref{equ gbeta k})]. \textbf{If the notation of a function space does not involve $D$, we mean the space satisfies the indicated asymptotic property} (which should be clear from the context).  The sets and (slant) cylinders are standard ones minus $D$. This does not affect any measure theory, integration, or technique in this article, because the space wise co-dimension of $D$ is $2$. For the proof of Theorem \ref{thm Harnack inequality parabolic}, we don't have to chop of any singularity.
\begin{Def}\label{Def slant cylinder} A slant cylinder $SC_{r}(y_{0},y_{1},T_{0}, T_{1})$, which we abbreviate in most the time as $SC_{r}$, is the following set
\begin{equation}
SC_{r}\triangleq \{x|\ |x-y(t)|<r,\ T_{0}<t\leq T_{1}\}, 
\end{equation}
where $y(t)=y_{0}+\frac{(t-T_{0})(y_{1}-y_{0})}{T_{1}-T_{0}}$.  \textrm{When $y_{1}=y_{0}$, $SC_{r}$ is just the usual cylinder $C_{r}$} defined above Theorem \ref{thm Harnack inequality parabolic}. We define   $l\triangleq \frac{r(y_{1}-y_{0})}{T_{1}-T_{0}}$ as the parabolic slope of $SC_{r}$. $l$ is invariant under 
\begin{itemize}\item the usual parabolic rescaling (linear multiplication on $y_{0},y_{1},r$ and quadratic multiplication on $T_{0},T_{1}$ by the same factor),
\item  the space-wise translation (on $y_{0},y_{1}$ by the same displacement), 
\item and the time-wise translation (on $T_{0}$ and $T_{1}$ by the same displacement).
\end{itemize}
\end{Def}
\begin{Def}\label{Def weak solution} We say $u$ is a weak sub solution to 
\begin{equation}\label{equ sub heat equation of g}
\frac{\partial u}{\partial t}-\Delta_{g}u\leq 0,
\end{equation}
in a slant cylinder $SC_{r}$ if 
\begin{enumerate}
\item $u\in C^{2+\alpha,1+\frac{\alpha}{2}} \{SC_{r}\setminus D\times [T_{0},T_{1}]\}\cap L^{\infty}(SC_{r})$;
\item Inequality (\ref{equ sub heat equation of g}) holds over 
$SC_{r}\setminus D\times [T_{0},T_{1}]$ in the classical sense. 
\end{enumerate}
We call a function $\eta$ (defined in  any bounded space-time domain  $\Omega\in \C^{n}\times (-\infty,\infty)$) tame if $\eta\in C^{1,1} \{\Omega \setminus D\times [T_{0},T_{1}]\}\cap L^{\infty}(\Omega)$ and the following holds. 
\begin{equation}
\frac{\partial \eta}{\partial t}\in L^{1}(\Omega),\ \nabla \eta\in L^{2}(\Omega).
\end{equation}
\end{Def}
\begin{rmk}The $L^{\infty}(SC_{r})$-requirement in Definition \ref{Def weak solution} is crucial, and is  the only global condition. It guarantees (\ref{equ sub heat equation of g}) holds across the singularity  in the sense of integration by parts.
\end{rmk}
\begin{Def}\label{Def the cutoff trick by G} Exactly as in Corollary 2.3 in \cite{FS}, we define 
the cutoff function of $u$ as 
\begin{equation}\label{equ the cutoff trick by G}
u_{\epsilon}=G(u),
\end{equation}
where $G$ is a function with one variable such that $G(u)=0$ when $u\leq \epsilon$, $G(u)=u+G(2\epsilon)-2\epsilon$ when $u\geq 2\epsilon$, and $G,G^{\prime},G^{\prime \prime}\geq 0$. Consequently, we have 
\begin{equation}
G(2\epsilon)\leq \epsilon\ \textrm{and}\ \max\{u-2\epsilon,0\}\leq u_{\epsilon}\leq \max\{u-\epsilon,0\}.
\end{equation}

The most important feature of $u_{\epsilon}$ is that, suppose $u$ is sub solution to (\ref{equ sub heat equation of g}), so is $u_{\epsilon}$ i.e 
\begin{equation}
\frac{\partial u_{\epsilon}}{\partial t}-\Delta_{g}u_{\epsilon}\leq 0. 
\end{equation}

 $u_{\epsilon}$ can be understood as   the smoothing of $u^{+}$ (non-negative part of $u$). We note that in the classical case, $u^{+}$ is a sub solution (in proper sense) if $u$ is. The above smoothing is point wise, thus works in the presence of conic singularities.
\end{Def}

\begin{lem}\label{lem: parabolic inequality holds in the right way} Under the same assumptions in Part $I$ of Theorem \ref{thm conic Harnack inequality parabolic} (for any $r$), suppose $u$ is a non-negative weak solution to (\ref{equ sub heat equation of g}) in the sense of Definition \ref{Def weak solution} in a slant cylinder $SC_{r},\ r\leq \frac{1}{100 n}$. Then for any non-negative tame  function $\eta$ which is compactly supported in $SC_{r}$ space wisely, we have 
\begin{eqnarray}\label{equ weak solution identity}
& &\int_{\C^{n}} u\eta^{2} dV_{g}|^{t_{2}}_{t_{1}}+\int^{t_{2}}_{t_{1}}\int_{\C^{n}} <\nabla_{g}u,\nabla_{g}\eta^{2}>dV_{g}ds\nonumber
\\&\leq & \int^{t_{2}}_{t_{1}}\int_{\C^{n}} u\frac{\partial \eta^{2}}{\partial t} dV_{g}ds+K\int^{t_{2}}_{t_{1}}\int_{\C^{n}}u \eta^{2} dV_{g}ds.
\end{eqnarray}

Moreover, we have 
\begin{eqnarray}\label{equ weak solution energy estimate}
& &\int_{\C^{n}} u^2\eta^2 dV_{g}|^{t_{2}}_{t_{1}}+\frac{1}{2}\int^{t_{2}}_{t_{1}}\int_{\C^{n}} |\nabla_{g}u|^2 \eta^{2} dV_{g}ds
\\&\leq & \int^{t_{2}}_{t_{1}}\int_{\C^{n}} u^2\frac{\partial \eta^2}{\partial t} dV_{g}ds+(2K+200)\int^{t_{2}}_{t_{1}}\int_{\C^{n}}u^2(\eta^2+|\nabla_{g}\eta|^2) dV_{g}ds,\nonumber
\end{eqnarray}
and therefore 
\begin{equation}\label{equ parabolic energy is bounded}
\int_{\Omega} |\nabla_{g}u|^2dV_{g}ds<+\infty,\ \textrm{for any (parabolic) compact sub-domain}\ \Omega\ \textrm{of}\ SC_{r}.
\end{equation}
\end{lem}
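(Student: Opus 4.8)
The plan is to derive \eqref{equ weak solution identity} by a careful integration-by-parts argument, using $\eta^2$ as a test function against the classical subsolution inequality $\partial_t u - \Delta_g u \le 0$, valid on $SC_r \setminus D$; the only novelty compared to the smooth case in \cite{FS} is controlling the boundary contribution near $D$, which vanishes because $D$ has real codimension $2$. First I would write the volume form as $dV_g = a_0 \, dx$ in the polar coordinates of \eqref{equ conic model metric quasi-isometric to Euclidean metric}–\eqref{equ nearly-conic model metric quasi-isometric to Euclidean metric} (so that we ``don't see the conic singularity'' and everything is quasi-isometric to Euclidean), multiply the subsolution inequality by $\eta^2 \ge 0$, and integrate over $(\C^n \setminus D)\times[t_1,t_2]$. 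The time-derivative term $\int \partial_t u \cdot \eta^2 \, dV_g$ is handled by moving the $t$-derivative onto $u\eta^2 a_0$: here is where the hypothesis $\partial_t \log a_0 \le K$, equivalently $\partial_t dV_g \le K \, dV_g$, enters — it produces exactly the good-signed term $+K\int u\eta^2 \, dV_g$ on the right, rather than an uncontrolled term. The Laplacian term $-\int (\Delta_g u)\eta^2 \, dV_g$ integrates by parts to $\int \langle \nabla_g u, \nabla_g \eta^2\rangle \, dV_g$, modulo a boundary term on small tubes $\{\mathrm{dist}(\cdot, D) = \delta\}$; this term is $O(\delta \cdot |\nabla_g u| \cdot \eta^2)$ in Euclidean terms and tends to $0$ as $\delta \to 0$ because $D$ has codimension $2$ and, crucially, $u, \eta$ are bounded ($L^\infty$ across the singularity) while the relevant surface measure shrinks — this is the ``integration by parts is true'' remark, and is the analogue of Claim \ref{equ L2 norm of gradient of Berdtsson cutoff function vanishes}.

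The main obstacle is the justification of that integration by parts, i.e.\ showing the tube boundary term vanishes without assuming $u$ is more than bounded with locally finite energy. The honest way is to introduce a Berndtsson-type cutoff $\chi_\delta$ supported away from $D$ with $\chi_\delta \to 1$ pointwise and $\int |\nabla_g \chi_\delta|^2 \, dV_g \to 0$ (which exists precisely because $\mathrm{codim}\, D = 2$; this is the content invoked as Claim \ref{equ L2 norm of gradient of Berdtsson cutoff function vanishes}), apply the clean integration by parts to $\eta^2 \chi_\delta$, and pass to the limit. The cross term $\int u \langle \nabla_g u, \nabla_g \chi_\delta\rangle \eta^2 \, dV_g$ is then bounded by $\|u\eta^2\|_\infty \, \|\nabla_g u\|_{L^2(\mathrm{supp}\,\nabla\chi_\delta)} \, \|\nabla_g\chi_\delta\|_{L^2} \to 0$, once we know $\nabla_g u \in L^2_{loc}$ — but note this last fact is \eqref{equ parabolic energy is bounded}, which is one of the conclusions, so I must be slightly careful about the logical order: one first proves \eqref{equ weak solution identity} with the weaker a priori input that $u$ is smooth away from $D$ (so $\nabla_g u \in L^2$ on sets compactly supported away from $D$, which suffices since $\nabla \chi_\delta$ is supported there), and only afterwards bootstraps to the energy estimate on full compact subdomains.

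Once \eqref{equ weak solution identity} is established, \eqref{equ weak solution energy estimate} follows the same route with $u$ replaced by $u^2$: multiply $\partial_t u - \Delta_g u \le 0$ by $u\eta^2$ (legitimate since $u \ge 0$ and $u$ is bounded), so that $\int u \,\partial_t u \,\eta^2 \, dV_g = \tfrac12 \partial_t \int u^2 \eta^2 \, dV_g - \tfrac12 \int u^2 \partial_t(\eta^2 a_0)\,a_0^{-1} dV_g$, again using $\partial_t \log a_0 \le K$; and $-\int (\Delta_g u)\, u\eta^2\, dV_g = \int |\nabla_g u|^2 \eta^2 \, dV_g + \int u\,\langle \nabla_g u, \nabla_g \eta^2\rangle\, dV_g$ after the same cutoff-limit integration by parts. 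Absorbing the cross term $|\int u \langle \nabla_g u, \nabla_g\eta^2\rangle| \le \tfrac12\int |\nabla_g u|^2\eta^2 + 2\int u^2 |\nabla_g \eta|^2$ into the left side yields the factor $\tfrac12$ on the gradient term and the constant $2K+200$ (the $200$ being generous slack from the quasi-isometry constants relating $g$, $g^k_\beta$ and $g_E$, and the bound $r \le \tfrac{1}{100n}$). Finally \eqref{equ parabolic energy is bounded} is immediate from \eqref{equ weak solution energy estimate}: for a compact parabolic subdomain $\Omega \Subset SC_r$ choose a fixed tame $\eta$ with $\eta \equiv 1$ on $\Omega$, and the right-hand side is finite because $u \in L^\infty$, $\nabla_g \eta \in L^2$, and $\partial_t \eta \in L^1$ by the tameness requirements in Definition \ref{Def weak solution}.
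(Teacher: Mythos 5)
Your plan correctly identifies the two main ingredients (a Berndtsson-type cutoff $\chi_\delta$ to sidestep integration by parts across $D$, and the sign hypothesis $\partial_t \log a_0 \le K$ to control the volume-form term), and your treatment of the energy inequality \eqref{equ weak solution energy estimate} — multiply by $u\eta^2\chi_\delta^2$, integrate by parts, absorb the Cauchy--Schwarz piece $\tfrac{1}{C}\int|\nabla_g u|^2\eta^2\chi_\delta^2$ into the left-hand side, and send $\delta\to 0$ using $u\in L^\infty$ and $\|\nabla_g\chi_\delta\|_{L^2}\to 0$ — is sound and matches the paper. However, there is a genuine gap in the \emph{order} in which you prove the two inequalities, and you have half-noticed it without actually resolving it.

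You propose to prove \eqref{equ weak solution identity} first, i.e.\ use $\eta^2\chi_\delta^2$ (no factor of $u$) as the test function. In that case the left-hand side has no $\int|\nabla_g u|^2\eta^2\chi_\delta^2$ term, so the cross term $\int\langle\nabla_g u,\nabla_g\chi_\delta\rangle\,\eta^2\chi_\delta\,dV_g$ cannot be \emph{absorbed}; it must literally tend to zero. Your justification — ``$\nabla_g u\in L^2$ on sets compactly supported away from $D$, which suffices since $\nabla\chi_\delta$ is supported there'' — only shows $\|\nabla_g u\|_{L^2(\operatorname{supp}\nabla\chi_\delta)}$ is finite for each fixed $\delta$. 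It does \emph{not} give a bound that is uniform as $\delta\to 0$: the supports $\operatorname{supp}\nabla\chi_\delta$ shrink toward $D$, where $u$ is merely $L^\infty$ with degenerate coefficients, and nothing a priori prevents $\int_{\operatorname{supp}\nabla\chi_\delta}|\nabla_g u|^2$ from blowing up faster than $\|\nabla_g\chi_\delta\|_{L^2}^2$ decays. So the claimed limit $\|\nabla_g u\|_{L^2(\operatorname{supp}\nabla\chi_\delta)}\|\nabla_g\chi_\delta\|_{L^2}\to 0$ is unjustified. The paper avoids this precisely by reversing your order: it proves \eqref{equ weak solution energy estimate} \emph{first} (where the cross term \emph{can} be absorbed, so no a priori $L^2$-gradient bound is needed), deduces \eqref{equ parabolic energy is bounded}, and only then obtains \eqref{equ weak solution identity}, where the now-established bound $\nabla_g u\in L^2_{\mathrm{loc}}$ up to $D$ makes the cross term go to zero. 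Fix the order and your argument is essentially the paper's.
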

\begin{rmk} By the same proof, the energy estimate of (\ref{equ considered by FS}) is similar.
\end{rmk}
\begin{proof}{of Lemma \ref{lem: parabolic inequality holds in the right way}:} Let $r_{i}$  be  the distance function to the smooth hyper surface  $D_{i}$ . We consider Berdtsson's cutoff function $\psi_{i,\epsilon}=\psi(\epsilon\log(-\log r_{i}))$, $\psi$ is the standard cutoff function such that $\psi(x)\equiv 1$ when $x\leq \frac{1}{2}$, and $\psi(x)\equiv 0$ when $x\geq \frac{4}{5}$.  Then 
\begin{equation}\label{equ psi epsilon is a cutoff function}
\psi_{i,\epsilon}\equiv 0\ \textrm{when}\ r_{i}\leq  e^{-e^{\frac{4}{5\epsilon}}};\ \psi_{i,\epsilon}\equiv 1\ \textrm{when}\ r_{i}\geq  e^{-e^{\frac{1}{2\epsilon}}}.
\end{equation}

Let $\psi_{\epsilon}=\prod_{i=1...n}\psi_{i,\epsilon}$, the following claim is true.
\begin{clm}\label{equ L2 norm of gradient of Berdtsson cutoff function vanishes}
\begin{equation}\label{equ W12 norm of psi tends to 0}
\lim_{\epsilon \rightarrow 0}|\nabla_{E}\psi_{\epsilon}|_{L^{2}(B(\frac{1}{2}))}=0.
\end{equation}
\end{clm}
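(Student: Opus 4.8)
The plan is to bound each factor $\psi_{i,\epsilon}$ of the product $\psi_\epsilon=\prod_{i}\psi_{i,\epsilon}$ separately, reducing the estimate to an elementary one-dimensional integral. First I would compute the gradient: by the chain rule, wherever $\nabla_E\psi_{i,\epsilon}\neq 0$,
\[
|\nabla_E\psi_{i,\epsilon}|=\bigl|\psi'\bigl(\epsilon\log(-\log r_i)\bigr)\bigr|\,\frac{\epsilon}{r_i\,|\log r_i|}\,|\nabla_E r_i|\;\le\;\frac{C\epsilon}{r_i\,|\log r_i|},
\]
using that $\psi'$ is bounded and that $r_i$, being a distance function, is $1$-Lipschitz, so $|\nabla_E r_i|\le 1$. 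Since $\psi'$ is supported where its argument lies in $[\tfrac12,\tfrac45]$, the support of $\nabla_E\psi_{i,\epsilon}$ lies in the shell $\{\,e^{-e^{4/(5\epsilon)}}\le r_i\le e^{-e^{1/(2\epsilon)}}\,\}$, consistently with (\ref{equ psi epsilon is a cutoff function}).

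The second step is to integrate this bound over $B(\tfrac12)$ by the coarea formula for $r_i$. Since (in the local polar coordinates) $D_i$ is the smooth complex hypersurface $\{z_i=0\}$, of real codimension $2$, the Euclidean measure of $\{\,\rho\le r_i\le\rho+d\rho\,\}\cap B(\tfrac12)$ is $\le C\rho\,d\rho$. Hence
\[
\int_{B(\frac12)}|\nabla_E\psi_{i,\epsilon}|^2\,dV_E\;\le\;C\epsilon^2\int_{e^{-e^{4/(5\epsilon)}}}^{e^{-e^{1/(2\epsilon)}}}\frac{\rho\,d\rho}{\rho^2(\log\rho)^2}\;=\;C\epsilon^2\int_{e^{-e^{4/(5\epsilon)}}}^{e^{-e^{1/(2\epsilon)}}}\frac{d\rho}{\rho(\log\rho)^2}.
\]
The last integrand has antiderivative $-1/\log\rho$, so the integral equals $e^{-1/(2\epsilon)}-e^{-4/(5\epsilon)}\le e^{-1/(2\epsilon)}$, giving $\int_{B(\frac12)}|\nabla_E\psi_{i,\epsilon}|^2\,dV_E\le C\epsilon^2 e^{-1/(2\epsilon)}\to 0$ as $\epsilon\to0$. (The logarithmic nature of the cutoff is what makes this work: a cutoff depending on $r_i/\delta$ alone would give only an $O(1)$ bound, the shrinking support having measure comparable to $\delta^2$.)

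Finally I would pass from the factors to the product. From $0\le\psi_{i,\epsilon}\le1$ we get $|\nabla_E\psi_\epsilon|=\bigl|\sum_i\bigl(\prod_{j\neq i}\psi_{j,\epsilon}\bigr)\nabla_E\psi_{i,\epsilon}\bigr|\le\sum_i|\nabla_E\psi_{i,\epsilon}|$, whence by Cauchy--Schwarz $\int_{B(\frac12)}|\nabla_E\psi_\epsilon|^2\le n\sum_i\int_{B(\frac12)}|\nabla_E\psi_{i,\epsilon}|^2\le Cn^2\epsilon^2e^{-1/(2\epsilon)}\to0$, which is (\ref{equ W12 norm of psi tends to 0}); the same conclusion holds with $dV_g$ in place of $dV_E$, since $g$ is quasi-isometric to $g_E$. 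The only point that deserves care is the codimension-$2$ sublevel-volume estimate $|\{\,r_i\le\rho\,\}\cap B(\tfrac12)|\le C\rho^2$ when $D_i$ is merely irreducible and possibly self-intersecting; but in the polar coordinates in which we work the relevant hypersurface is a smooth coordinate slice, where the bound follows from Fubini in the $z_i$-plane, and globally one localizes over the finite open cover --- so this is a mild, routine obstacle rather than a real one.
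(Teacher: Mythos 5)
Your proof is correct and follows essentially the same route as the paper's: chain rule gives $|\nabla_E\psi_{i,\epsilon}|\lesssim \epsilon/(r_i|\log r_i|)$, the codimension-two factor $r_i\,dr_i$ in the polar coordinates reduces the estimate to the convergent one-dimensional integral $\int dr/(r(\log r)^2)$, and the extra factor $\epsilon^2$ sends the whole thing to zero. The only (harmless) differences are that you restrict the integral to the support shell of $\psi'$ to obtain the sharper-than-needed bound $C\epsilon^2 e^{-1/(2\epsilon)}$ and you write out the product rule over all $i$ explicitly, whereas the paper settles for the crude bound $C\epsilon^2$ over $(0,1/2)$ and verifies only the $\partial/\partial r_1$ term, calling the others similar.
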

The proof of Claim \ref{equ L2 norm of gradient of Berdtsson cutoff function vanishes} is elementary. We only  verify it for  $\frac{\partial \psi_{\epsilon}}{\partial r_{1}}$, the other directional derivatives are similar. We compute $\frac{\partial \psi_{\epsilon}}{\partial r_{1}}=-\psi^{\prime}\frac{\epsilon}{r_{1} \log r_{1}}\prod_{i\neq 1}\psi_{i,\epsilon}$. Hence in poly-cylindrical coordinates we find \begin{eqnarray}
& &\int_{B(\frac{1}{2})}|\frac{\partial \psi_{\epsilon}}{\partial r_{1}}|^{2}dvol_{E}\nonumber
\leq  C\epsilon^{2} \int^{\frac{1}{2}}_{0}\frac{1}{r_{1}(\log r_{1})^2}dr_{1}\leq C\epsilon^{2}. 
\end{eqnarray}
. 

We first prove (\ref{equ weak solution energy estimate}). By definition we have $\lim_{\epsilon \rightarrow 0}\psi_{\epsilon}=1$ everywhere except on $supp D$. We  multiply both hand sides of (\ref{equ sub heat equation of g}) by $u\eta^2 \psi^2_{\epsilon}$, then integrate by parts and integrate with respect to time, we obtain 
\begin{eqnarray}& &\frac{1}{2}\int_{\C^{n}}u^2\eta^2\psi^2_{\epsilon}dV_{g}|^{t_{2}}_{t_{1}}
+\int^{t_{2}}_{t_{1}}|\nabla_{g}u|^2\eta^{2}\psi_{\epsilon}^2dV_{g}ds
\\&\leq & \frac{1}{2}\int^{t_{2}}_{t_{1}} \int_{\C^{n}}u^2\eta^2\psi^2_{\epsilon}\frac{\partial dV_{g}}{\partial t}ds
-2\int^{t_{2}}_{t_{1}}\int_{\C^{n}}<\nabla_{g}u,\nabla_{g}\eta>u\eta\psi^{2}_{\epsilon}dV_{g}ds\nonumber
\\& &+\frac{1}{2}\int^{t_{2}}_{t_{1}} \int_{\C^{n}}u^2\frac{\partial \eta^2}{\partial t}\psi^2_{\epsilon}dV_{g}ds-2\int^{t_{2}}_{t_{1}}\int_{\C^{n}}<\nabla_{g}u,\nabla_{g}\psi_{\epsilon}>u\eta^2\psi_{\epsilon}dV_{g}ds.\nonumber
\end{eqnarray}

Using Cauchy-Schwartz inequality we deduce that 
\begin{eqnarray}
& &|2\int^{t_{2}}_{t_{1}}\int_{\C^{n}}<\nabla_{g}u,\nabla_{g}\psi_{\epsilon}>u\eta^2\psi_{\epsilon}dV_{g}ds|
\\& \leq & \frac{1}{100}\int^{t_{2}}_{t_{1}}\int_{\C^{n}}|\nabla_{g}u|^2\psi_{\epsilon}^2\eta^2dV_{g}ds+100\int^{t_{2}}_{t_{1}}\int_{\C^{n}}u^2\eta^{2}|\nabla_{g}\psi_{\epsilon}|^2dV_{g}ds.\nonumber
\end{eqnarray}
Similarly we have 
\begin{eqnarray}
& &|2\int^{t_{2}}_{t_{1}}\int_{\C^{n}}<\nabla_{g}u,\nabla_{g}\eta>u\eta\psi^2_{\epsilon}dV_{g}ds|
\\& \leq & \frac{1}{100}\int^{t_{2}}_{t_{1}}\int_{\C^{n}}|\nabla_{g}u|^2\psi_{\epsilon}^2\eta^2dV_{g}ds+100\int^{t_{2}}_{t_{1}}\int_{\C^{n}}u^2\psi_{\epsilon}^2|\nabla_{g}\eta|^2dV_{g}ds.\nonumber
\end{eqnarray}

Notice that by (\ref{equ condition upper bound of volume form derivative}) we have \begin{equation}\int^{t_{2}}_{t_{1}} \int_{\C^{n}}u^2\eta^2\psi^2_{\epsilon}\frac{\partial dV_{g}}{\partial t}ds
\leq K\int^{t_{2}}_{t_{1}} \int_{\C^{n}}u^2\eta^2\psi^2_{\epsilon} dV_{g}ds.
\end{equation}

Then \begin{eqnarray}\label{equ energy estimate main big step}& &\frac{1}{2}\int_{\C^{n}}u^2\eta^2\psi^2_{\epsilon}dV_{g}|^{t_{2}}_{t_{1}}
+\int^{t_{2}}_{t_{1}}\int_{\C^{n}}|\nabla_{g}u|^2\eta^{2}\psi_{\epsilon}^2dV_{g}ds
\\&\leq & K\int^{t_{2}}_{t_{1}} \int_{\C^{n}}u^2\eta^2\psi^2_{\epsilon} dV_{g}ds+\frac{1}{2}\int^{t_{2}}_{t_{1}} \int_{\C^{n}}u^2\frac{\partial \eta^2}{\partial t}\psi^2_{\epsilon}dV_{g}ds\nonumber
\\& & +\frac{1}{100}\int^{t_{2}}_{t_{1}}\int_{\C^{n}}|\nabla_{g}u|^2\psi_{\epsilon}^2\eta^2dV_{g}ds+100\int^{t_{2}}_{t_{1}}\int_{\C^{n}}u^2\psi_{\epsilon}^2|\nabla_{g}\eta|^2dV_{g}ds\nonumber
\\& &+\frac{1}{100}\int^{t_{2}}_{t_{1}}\int_{\C^{n}}|\nabla_{g}u|^2\psi_{\epsilon}^2\eta^2dV_{g}ds+100\int^{t_{2}}_{t_{1}}\int_{\C^{n}}u^2\eta^{2}|\nabla_{g}\psi_{\epsilon}|^2dV_{g}ds.\nonumber
\end{eqnarray}
We note that Definition \ref{Def weak solution} requires $u\in L^{\infty}$, then (\ref{equ W12 norm of psi tends to 0}) implies 
 \begin{equation}
 \lim_{\epsilon \rightarrow 0}100\int^{t_{2}}_{t_{1}}\int_{\C^{n}}u^2\eta^{2}|\nabla_{g}\psi_{\epsilon}|^2dV_{g}ds=0.
 \end{equation}
Let $\epsilon\rightarrow 0$ in (\ref{equ energy estimate main big step}), the proof of (\ref{equ weak solution energy estimate}) and (\ref{equ parabolic energy is bounded}) is complete. 

 Multiplying  both hand sides of (\ref{equ sub heat equation of g}) by $\eta \psi_{\epsilon}$ and   integrating by parts over space-time,  (\ref{equ weak solution identity}) is proved similarly.
 \end{proof}

By the same proof as Lemma \ref{lem: parabolic inequality holds in the right way} (with Berndtsson's cutoff function),  the Sobolev embedding theorem is true.
\begin{lem}\label{lem Sobolev}(Sobolev Embedding)
Given a function $u\in C^{1} \{B\setminus D\}\cap L^{\infty}(B)$, for any cutoff function $\eta\in C_{0}^{1}(B)$, the following holds. 
\[ (\int_{B}|\eta u|^{\frac{2n}{2n-1}}dV_{E})^{\frac{2n-1}{2n}}\leq N(\beta,n)\int_{B}|\nabla (\eta u)|dV_{E}.\]
\end{lem}
\begin{proof} It's true when $\int_{B}|\nabla (\eta u)|dV_{E}=\infty$. When $\int_{B}|\nabla (\eta u)|dV_{E}<\infty$, using $\psi_{\epsilon}$, Claim \ref{equ L2 norm of gradient of Berdtsson cutoff function vanishes}, and the same proof as in Lemma \ref{lem: parabolic inequality holds in the right way},  $\eta u$ belongs to $W^{1,1}(B)$ in the usual sense. Then it follows  from the usual Sobolev-inequality.\end{proof}
\begin{rmk}The $N(\beta,n)$ above does not depend on the radius or center of the ball. The only place where we use the Sobolev embedding is (\ref{equ the only place where Sobolev imbedding is used explicitly}).
\end{rmk}
\section{Proof of Theorem \ref{thm growth theorem} by   energy inequalities\label{Section proof of the Growth theorem}}\subsection{Growth Lemma}

\begin{prop}(Growth Lemma)\label{prop growth thm 1} Suppose $u$ is a weak sub solution to (\ref{equ sub heat equation of g}) in a  cylinder $C_{2r}(Y)$. Then there exists a $\mu_{2}(n,\beta,K)>0$ such that  
\begin{equation}\label{equ 1 in Growth Lem statement}
\frac{|\{u>0\}\cap C_{2r}(Y)|}{|C_{2r}(Y)|}\leq \mu_{2}\ \textrm{implies}\
\sup_{C_{r}}u\leq \frac{1}{2}\sup_{C_{2r}}u^{+}.
\end{equation}

\end{prop}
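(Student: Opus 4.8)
The plan is to run the classical De Giorgi iteration, exactly as in \cite{LSU} and \cite{FS}: in keeping with the observation stressed in the introduction, the only analytic ingredients are the energy inequality (\ref{equ weak solution energy estimate}) of Lemma \ref{lem: parabolic inequality holds in the right way}, applied to truncations of $u$, and the Sobolev embedding of Lemma \ref{lem Sobolev}. The conic singularity enters nowhere except through Berndtsson's cut-off $\psi_{\epsilon}$, inserted precisely where it was in the proof of Lemma \ref{lem: parabolic inequality holds in the right way} (and controlled by Claim \ref{equ L2 norm of gradient of Berdtsson cutoff function vanishes}), while the quasi-isometry (\ref{equ conic model metric quasi-isometric to Euclidean metric}) lets us replace $dV_{g}$ and $|\nabla_{g}\cdot|$ by their Euclidean counterparts up to factors depending only on $n,\beta,K$. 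First I would normalize: by translation assume $Y=(0,0)$ (the radius $r$ is kept general, subject only to the smallness required for Lemma \ref{lem: parabolic inequality holds in the right way} to be available on $C_{2r}$), put $M\triangleq\sup_{C_{2r}}u^{+}$, and---the case $M=0$ being trivial---divide $u$ by $M$ so that $M=1$. It then suffices to show $(u-\tfrac12)^{+}\equiv 0$ on $C_{r}$. Recall from Definition \ref{Def the cutoff trick by G} that each truncation $v_{k}\triangleq(u-k)^{+}$, $k\ge 0$, is again a bounded non-negative weak sub solution to (\ref{equ sub heat equation of g}); this is what legitimizes the iteration across the singular set.

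The two building blocks are standard. (i) For $r\le\rho'<\rho\le 2r$ and $k\ge 0$, pick a tame cut-off $\eta$ equal to $1$ on $C_{\rho'}$ and vanishing near the parabolic boundary of $C_{\rho}$; then (\ref{equ weak solution energy estimate}) applied to $v_{k}$ gives the Caccioppoli bound $\sup_{t}\int v_{k}^{2}\eta^{2}\,dV_{g}+\iint|\nabla_{g}(v_{k}\eta)|^{2}\,dV_{g}\,ds\le \tfrac{N}{(\rho-\rho')^{2}}\iint_{C_{\rho}}v_{k}^{2}\,dV_{g}\,ds$ with $N=N(n,\beta,K)$; crucially, the extra zeroth-order term $K\iint v_{k}^{2}\eta^{2}$ produced by the time-dependence of $a_{0}$ is absorbed and only affects $N$. (ii) Upgrading Lemma \ref{lem Sobolev} (the embedding $W^{1,1}\hookrightarrow L^{2n/(2n-1)}$, i.e.\ $L^{m/(m-1)}$ with $m=2n$) to $W^{1,2}\hookrightarrow L^{2m/(m-2)}$ by testing against powers of $|v|$, and interpolating spatially against the $L^{\infty}_{t}L^{2}_{x}$ bound from (i), yields the parabolic Sobolev inequality: with $q\triangleq 1+\tfrac2m=1+\tfrac1n>1$ and $\delta\triangleq 1-\tfrac1q=\tfrac1{n+1}>0$,
\[
\iint(v_{k}\eta)^{2q}\,dV_{g}\,ds\ \le\ N\Big(\sup_{t}\int v_{k}^{2}\eta^{2}\,dV_{g}\Big)^{q-1}\iint|\nabla_{g}(v_{k}\eta)|^{2}\,dV_{g}\,ds.
\]
Both steps invoke $\psi_{\epsilon}$ and Claim \ref{equ L2 norm of gradient of Berdtsson cutoff function vanishes} verbatim as in Lemma \ref{lem: parabolic inequality holds in the right way} and Lemma \ref{lem Sobolev}.

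For the iteration, set $k_{j}\triangleq\tfrac12(1-2^{-j})$ (so $k_{0}=0$, $k_{j}\uparrow\tfrac12$), $\rho_{j}\triangleq r(1+2^{-j})$ (so $C_{\rho_{j}}\downarrow C_{r}$), and $A_{j}\triangleq\iint_{C_{\rho_{j}}}v_{k_{j}}^{2}\,dV_{g}\,ds$. Combining (i) and (ii) at level $k_{j+1}$ between $\rho_{j+1}$ and $\rho_{j}$, then applying H\"older's inequality over the set $\{u>k_{j+1}\}$ together with the Chebyshev estimate $|\{u>k_{j+1}\}\cap C_{\rho_{j}}|\le(k_{j+1}-k_{j})^{-2}A_{j}=4^{j+2}A_{j}$ and the monotonicity $v_{k_{j+1}}\le v_{k_{j}}$, yields a recursion of the form $A_{j+1}\le C\,B^{j}A_{j}^{1+\delta}$ with $B>1$ and $C$ depending on $n,\beta,K$ and on a negative power of $r$. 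By the standard fast-convergence lemma (Lemma~5.6 of Chapter~II in \cite{LSU}, or \cite{Lieberman}), setting $\theta\triangleq C^{-1/\delta}B^{-1/\delta^{2}}>0$, the bound $A_{0}\le\theta$ forces $A_{j}\to0$, hence $(u-\tfrac12)^{+}\equiv0$ on $C_{r}$. Since $A_{0}=\iint_{C_{2r}}(u^{+})^{2}\le\bigl(\sup_{C_{2r}}u^{+}\bigr)^{2}\,|\{u>0\}\cap C_{2r}|=|\{u>0\}\cap C_{2r}|$ and $|C_{2r}|$ is a fixed multiple of $r^{m+2}$, the condition $A_{0}\le\theta$ holds as soon as $|\{u>0\}\cap C_{2r}|/|C_{2r}|\le\mu_{2}$ for a suitable $\mu_{2}$; the $r$-powers cancel in this last inequality---necessarily so, since both sides, and the conclusion of the lemma, are scale-invariant---whence $\mu_{2}=\mu_{2}(n,\beta,K)$. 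This proves (\ref{equ 1 in Growth Lem statement}).

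The only genuine work will be bookkeeping: tracking the constants $C$, $B$, $\delta$ through steps (i), (ii) and the H\"older estimate to confirm they depend only on $n,\beta,K$ (and on the harmless power of $r$), in particular are independent of $\epsilon$ and of $u$; and checking that the new $K$-term in (\ref{equ weak solution energy estimate})---the sole feature absent from \cite{FS}---is indeed absorbable and leaves the geometric recursion untouched. The conic singularity is not an obstacle here: it has already been neutralized, once and for all, by $\psi_{\epsilon}$ together with Lemma \ref{lem Sobolev}, which were set up for exactly this purpose.
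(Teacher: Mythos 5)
Your proposal is correct in outline but takes a genuinely different route from the paper. The paper does \emph{not} run the classical $L^2$-based De Giorgi iteration. Instead, following \cite{FS}, it rescales to $r=1$, $\sup_{C_2}u=1$, and proves the \emph{pointwise} bound $u(Z)\le\tfrac12$ for every $Z\in C_1\setminus D$: the workhorse is the $L^1$-estimate of Lemma~\ref{lemma L1 estimate},
\[
\int_{C_{\rho/2}(Z)}(u-A)_{+}\,dV_{E}\,ds\ \le\ \tfrac{N}{\rho}\,|\{u>A\}\cap C_{\rho}(Z)|^{1+\frac{1}{2n+2}},
\]
proved by pairing the Caccioppoli bound with the $W^{1,1}\hookrightarrow L^{2n/(2n-1)}$ Sobolev embedding (Lemma~\ref{lem Sobolev}) and a H\"older trick; this feeds into the dyadic induction of \cite{FS} (their p.~99--100) that shrinks cylinders $C_{2^{-j}}(Z)$ while raising levels $\tfrac{1-2^{-j}}{2}$, yielding $|\{u>\tfrac{1-\rho}{2}\}\cap C_{\rho}(Z)|\le\mu_{1}\rho^{2n+2}|C_{\rho}(Z)|$ and hence $u(Z)\le\tfrac12$ since $Z\notin D$. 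You instead run the textbook De Giorgi recursion on $A_{j}=\iint_{C_{\rho_{j}}}v_{k_{j}}^{2}$ with fixed center and shrinking radii, driven by a Caccioppoli estimate plus the parabolic $L^{2}$-Sobolev inequality. Both are valid; what the paper's route buys is (a) it stays entirely at the $W^{1,1}$ Sobolev level, which the paper actually proves in the conic setting and which works uniformly for all $n\ge 1$, and (b) the very same Lemma~\ref{lemma L1 estimate} and dyadic-cylinder bookkeeping are reused verbatim for Proposition~\ref{prop Moser iteration equivalent to growth lemma}, so the growth lemma is tightly integrated with the rest of the \cite{FS} machinery the paper imports. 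Your route is more self-contained and familiar, but the parabolic Sobolev inequality you write (exponent $2q=2+\tfrac{2}{n}$ via $W^{1,2}\hookrightarrow L^{2m/(m-2)}$, $m=2n$) degenerates at $n=1$; you would need the standard $m=2$ modification, whereas the paper's $W^{1,1}$-based argument avoids that special case automatically. Finally, a small technical point you should spell out: the paper only establishes that the smoothing $G(u)$ of Definition~\ref{Def the cutoff trick by G} is a sub-solution, not $(u-k)^{+}$ directly, so your truncations $v_{k}$ should be replaced by $G(u-k)$ (with $\epsilon\to 0$ at the end), exactly as the paper does in the proof of Lemma~\ref{lemma L1 estimate}.
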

\begin{proof}[Proof of Proposition \ref{prop growth thm 2}:] The proof is formally the same as Lemma 4.1 in \cite{FS}. Since condition (\ref{equ condition upper bound of volume form derivative}) is involved, we still give a detailed proof for the reader's convenience. \textbf{The point is to show that we don't need  more on the equation than the energy estimates of sub-solutions} (Lemma \ref{lem: parabolic inequality holds in the right way} and the proof of it). The constants $N$ in this proof only depend on $n,\beta,K$.

 By rescaling invariance of the sub-equation (\ref{equ sub heat equation of g}),  it suffices to assume $r=1$ and  $\sup_{C_{r}} u=1$. We let $\mu_{2}$ be small enough.    It sufficies to prove that for all $Z\notin D$ and $Z\in C_{1}(Y)=C_{1}$, under the condition \begin{equation}
\frac{|\{u>0\}\cap C_{1}(Z)|}{|C_{1}(Z)|}\leq \frac{2^{2n+2}|\{u>0\}\cap C_{2}(Z)|}{|C_{2}(Z)|}\leq 2^{2n+2}\mu_{2}\triangleq \mu_{1},
\end{equation}
 the following estimate holds \begin{equation}
u(Z)\leq \frac{1}{2}.
\end{equation} 

We only need to apply Lemma \ref{lemma L1 estimate} ( (3.8) in page 33 of \cite{FS}).  Using exactly the induction argument from the last line of page 99 to line 16 of page 100 in \cite{FS} (only involving  Lemma \ref{lemma L1 estimate}), we deduce for any integer $j\geq 0$, for some $N(n,\beta,K)$, the following estimate holds when $N\mu_{1}^{\frac{1}{2n+2}}<\frac{1}{2}$.  
\begin{equation}\label{equ decay estimate of super level set}
|\{u>\frac{1-\rho}{2}\}\cap C_{\rho}(Z)|\leq \mu_{1}\rho^{2n+2}|C_{\rho}(Z)|,\ \rho=2^{-j}. 
\end{equation}
Since $Z\notin D$, (\ref{equ decay estimate of super level set}) directly implies that $u(Z)\leq \frac{1}{2}$. Were this not true, $u(Z)>\frac{1}{2}$ implies that there exists dyadic $\rho_{0}$ small enough such that $C_{\rho_{0}}(Z)$ does not touch the singularity $D$, and  $u>\frac{1}{2}$ over $C_{\rho_{0}}(Z)$. This contradicts (\ref{equ decay estimate of super level set}).



\end{proof}
\begin{lem}\label{lemma L1 estimate} Under the same setting as in Proposition \ref{prop growth thm 1} and its proof above, for any constant $A\geq 0$, we have $$\int_{C_{\frac{\rho}{2}}(Z)}(u-A)_{+}dV_{E}ds\leq 
\frac{N}{\rho}|\{u>A\}\cap C_{\rho}(Z)|^{1+\frac{1}{2n+2}}.$$
\end{lem}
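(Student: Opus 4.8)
The plan is to reproduce the De Giorgi–type $L^1$ estimate of \cite{FS} (their (3.8), p.~33) in the present setting, where the only change is the time-dependence of the volume form controlled by (\ref{equ condition upper bound of volume form derivative}). I would first reduce to $A=0$ by noting that $(u-A)_+$ is again a weak sub-solution of (\ref{equ sub heat equation of g}) (by the point-wise smoothing/truncation remark in Definition \ref{Def the cutoff trick by G}, which is insensitive to the conic locus), and that $\{u>A\}=\{(u-A)_+>0\}$; so it suffices to bound $\int_{C_{\rho/2}(Z)}u_+\,dV_E\,ds$ by $(N/\rho)\,|\{u>0\}\cap C_\rho(Z)|^{1+\frac{1}{2n+2}}$. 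Throughout I work in the polar coordinates of (\ref{equ conic model metric quasi-isometric to Euclidean metric})/(\ref{equ nearly-conic model metric quasi-isometric to Euclidean metric}), where $g$ is quasi-isometric to $g_E$ with constant depending only on $n,\beta,K$; hence $dV_g\asymp dV_E$ and $|\nabla_g\cdot|\asymp|\nabla_E\cdot|$, so all estimates may be freely transferred between $g$ and $g_E$ at the cost of constants absorbed into $N$.

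The core of the argument is the energy inequality (\ref{equ weak solution energy estimate}) of Lemma \ref{lem: parabolic inequality holds in the right way}: choosing a standard tame cutoff $\eta$ that is $1$ on $C_{\rho/2}(Z)$, supported in $C_\rho(Z)$, with $|\nabla\eta|\lesssim \rho^{-1}$ and $|\partial_t\eta^2|\lesssim\rho^{-2}$, and applying (\ref{equ weak solution energy estimate}) to the sub-solution $u_+$ (using $u$ is bounded, say $\sup_{C_{2}}u=1$ after the rescaling made in the proof of Proposition \ref{prop growth thm 1}, so $u_+\le 1$), gives
\[
\int\!\!\int_{C_{\rho/2}(Z)}\big(u_+^2\eta^2+|\nabla_g u_+|^2\eta^2\big)\,dV_g\,ds
\le \frac{N}{\rho^2}\,|\{u>0\}\cap C_\rho(Z)|,
\]
because the right-hand side integrands of (\ref{equ weak solution energy estimate}) are all supported on $\{u_+>0\}\cap C_\rho(Z)$ and bounded by $N\rho^{-2}$. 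Thus $\eta u_+\in W^{1,1}$-type bounds hold with the displayed $L^2$ control. Next I apply the Sobolev embedding of Lemma \ref{lem Sobolev} in space (fixing $t$) to $\eta u_+$ to upgrade from $L^2$-control of the gradient to an $L^{\frac{2n}{2n-1}}$-bound, then interpolate $L^1$ between this Sobolev exponent and the super-level set size: by Hölder,
\[
\int_{C_{\rho/2}(Z)}u_+\,dV_E\,ds
\le \Big(\int\!\!\int(\eta u_+)^{q}\Big)^{1/q}\,|\{u>0\}\cap C_\rho(Z)|^{1-1/q}
\]
for the appropriate parabolic exponent $q=1+\tfrac{1}{2n+1}$ (or the cleaner $q$ producing the exponent $1+\tfrac{1}{2n+2}$ after combining the space Sobolev gain with the $L^\infty_t L^2_x$ bound from the first term of (\ref{equ weak solution energy estimate})), and the Sobolev/energy bound above turns the first factor into $N\rho^{-1}|\{u>0\}\cap C_\rho(Z)|^{1/q}$, which is exactly the claimed power $1+\tfrac{1}{2n+2}$ with a $\rho^{-1}$ in front. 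This is precisely the bookkeeping of \cite{FS}, (3.6)–(3.8), and I would cite it rather than re-derive the interpolation constants.

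The only genuinely new point, and the step I expect to require the most care, is verifying that the energy inequality (\ref{equ weak solution energy estimate}) may legitimately be used here with constants independent of $\epsilon$ and of the conic data — but that is exactly the content of Lemma \ref{lem: parabolic inequality holds in the right way}, whose proof already handles the Berndtsson cutoff $\psi_\epsilon$ (Claim \ref{equ L2 norm of gradient of Berdtsson cutoff function vanishes}) and the volume-form derivative term via (\ref{equ crucial upper bound on the vol form derivative}); likewise Lemma \ref{lem Sobolev} already incorporates the conic singularity. So in the present lemma I need only check that the cutoff $\eta$ I choose is tame in the sense of Definition \ref{Def weak solution} (it is, being smooth and compactly supported away from the time-boundary) and that $(u-A)_+$ is a bounded weak sub-solution (it is, by boundedness of $u$ and Definition \ref{Def the cutoff trick by G}). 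The secondary subtlety is the parabolic interpolation that balances the $L^\infty_tL^2_x$ control against the $L^2_tL^{\frac{2n}{2n-1}}_x$ control to produce the exponent $1+\tfrac{1}{2n+2}$ rather than $1+\tfrac1{2n+1}$; here I would simply follow \cite{FS} verbatim, since the real dimension is $m=2n$ and their computation applies unchanged once $dV_g\asymp dV_E$.
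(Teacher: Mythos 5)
Your proposal captures the same essential ingredients as the paper's proof: reduce to $A=0,\ \rho=1$ by rescaling and linearity; use the energy inequality of Lemma \ref{lem: parabolic inequality holds in the right way} to get $L^\infty_t L^2_x$ and $L^2_t W^{1,2}_x$ control of $\eta u$ by the level-set measure; apply the spatial Sobolev embedding of Lemma \ref{lem Sobolev}; and interpolate to produce the exponent $1+\frac{1}{2n+2}$. This is the same route the paper takes. Two remarks, though, where you are imprecise in a way the paper is careful about. First, the paper does not work with $u_+$ directly but with the smoothing $u_\epsilon=G(u)$ from Definition \ref{Def the cutoff trick by G}, proves the estimate for $u_\epsilon$, and lets $\epsilon\to 0$; the reason is that $u_+$ has only Lipschitz regularity along the free boundary $\{u=0\}$ while the weak-solution framework of Definition \ref{Def weak solution} and the energy inequality require $C^{2+\alpha,1+\alpha/2}$ regularity off $D$, so the truncation needs to be smooth. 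You invoke Definition \ref{Def the cutoff trick by G} as if it licensed $(u-A)_+$ as a sub-solution, but the whole point of that definition is that $u_\epsilon$, not $u_+$, is the admissible object. Second, your proposed interpolation exponent $q=1+\frac{1}{2n+1}$ is not the one that works; the parabolic Sobolev exponent in real dimension $2n$ is $q=\frac{2(n+1)}{n}$, with $\|\eta u_\epsilon\|_{L^q_{x,t}}\lesssim (\|\eta u_\epsilon\|_{L^\infty_t L^2_x}^2+\|\nabla(\eta u_\epsilon)\|_{L^2_{x,t}}^2)^{1/2}\lesssim \rho^{-1}|E_u|^{1/2}$, which together with $1-1/q=\frac{n+2}{2(n+1)}$ gives $\frac12+\frac{n+2}{2(n+1)}=1+\frac{1}{2n+2}$; you acknowledge you would defer to \cite{FS}, and indeed the paper instead does a time-slice splitting $\int_B\eta u_\epsilon=(\int_B\eta u_\epsilon)^{\frac{1}{n+1}}(\int_B\eta u_\epsilon)^{\frac{n}{n+1}}$ using the Hölder bound for the first factor and the spatial Sobolev bound for the second, followed by Hölder in time — the two interpolations are equivalent up to bookkeeping, but you should present one of them correctly rather than leaving the exponent ambiguous.
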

\begin{proof}{of Lemma \ref{lemma L1 estimate}:} By linearity and rescaling invariance of the sub equation (\ref{equ sub heat equation of g}), without loss of generality we can assume $A=0$ and $\rho=1$ (note $u\leq 1$). Denote the set $\{(u>0)\cap C_{1}(Z)\}$ as $E_{u}$, and the space-wise set $\{x|(x,t)\in (u>0)\cap C_{1}(Z)\}$ as $Q(t)$. Hence $|E_{u}|=\int^{1}_{0}|Q(t)|dt$. We need to prove 
\begin{equation}\label{equ simplified L1-est}
\int_{C_{\frac{1}{2}}(Z)}u_{+}dV_{E}ds\leq 
N|E_{u}|^{1+\frac{1}{2n+2}}
\end{equation}
To show (\ref{equ simplified L1-est}) is true, it suffices to show that for any $\epsilon$ small enough,  $u_{\epsilon}$ satisfies 
\begin{equation}
\int_{C_{\frac{1}{2}}(Z)}u_{\epsilon}dV_{E}ds\leq 
N|E_{u_{\epsilon}}|^{1+\frac{1}{2n+2}}.
\end{equation}
The advantage of $u_{\epsilon}$ is that it's supported in $Q(t)$, and $0\leq u_{\epsilon}\leq 1$. Then  integration by parts implies the energy estimates in Lemma \ref{lem: parabolic inequality holds in the right way} holds true over $Q(t)$. Let $\eta$ be the standard cut-off function in $C_{1}(Z)$ which vanishes near the parabolic boundary, H\"older's inequality and Lemma \ref{lem: parabolic inequality holds in the right way} imply
 \begin{equation}\label{equ 1 in Lem of L1 estimate}
 \int_{B}\eta u_{\epsilon}dV_{E}|_{t}\leq |Q(t)|^{\frac{1}{2}}(\int_{B}\eta^{2}u^{2}_{\epsilon}dV_{E})^{\frac{1}{2}}|_{t}\leq NE_{u_{\epsilon}}^{\frac{1}{2}}|Q(t)|^{\frac{1}{2}}.
 \end{equation}
 We also have the following bootstrapping estimate on the same term. 
 \begin{eqnarray}\label{equ the only place where Sobolev imbedding is used explicitly}& & \int_{B}\eta u_{\epsilon}dV_{E}
\leq  (\int_{B}|\eta u_{\epsilon}|^{\frac{2n}{2n-1}}dV_{E})^{\frac{2n-1}{2n}}|Q(t)|^{\frac{1}{2n}}\leq  N(\int_{B}|\nabla (\eta u_{\epsilon})|dV_{E}) |Q(t)|^{\frac{1}{2n}}\nonumber
 \\&\leq & N(\int_{B}|\nabla (\eta u_{\epsilon})|^{2}dV_{E})^{\frac{1}{2}}|Q(t)|^{\frac{1}{2n}+\frac{1}{2}}\ [\textrm{since}\ supp \nabla (\eta u_{\epsilon})\subset \{u>0\}\cap B].
 \end{eqnarray}
By (\ref{equ 1 in Lem of L1 estimate}), (\ref{equ the only place where Sobolev imbedding is used explicitly}), Lemma \ref{lem: parabolic inequality holds in the right way}, and Fubini-Theorem [with the help of (\ref{equ parabolic energy is bounded})], 
  \begin{eqnarray}& & \int_{-1}^{0}\int_{B}\eta u_{\epsilon}dV_{E}ds\nonumber= \int_{-1}^{0}(\int_{B}\eta u_{\epsilon}dV_{E})^{\frac{1}{n+1}}(\int_{B}\eta u_{\epsilon}dV_{E})^{\frac{n}{n+1}}ds
\\&\leq & N E_{u_{\epsilon}}^{\frac{1}{2n+2}}\int^{0}_{-1}|Q(t)|^{\frac{n+2}{2n+2}}(\int_{B}|\nabla (\eta u_{\epsilon})|^{2}dV_{E})^{\frac{n}{2n+2}}dt
 \\& \leq & N |E_{u_{\epsilon}}|^{\frac{1}{2n+2}}(\int^{0}_{-1}|Q(t)|dt)^{\frac{n+2}{2n+2}} (\int^{0}_{-1}\int_{B}|\nabla (\eta u_{\epsilon})|^{2}dV_{E}dt)^{\frac{n}{2n+2}} \nonumber
\leq N |E_{u_{\epsilon}}|^{1+\frac{1}{2n+2}}
 \end{eqnarray}
 Since $\eta\equiv 1$ over $C_{\frac{1}{2}}(Z)$, the proof is complete. As we've seen, \textbf{nothing in this proof involves more than Lemma \ref{lem: parabolic inequality holds in the right way} on the sub-solutions}.
\end{proof}

\begin{prop}\label{prop Moser iteration equivalent to growth lemma}
Suppose $u$ is a weak sub solution to (\ref{equ sub heat equation of g}) in a  cylinder $C_{r}(Y)$, $y\notin D$. Then 
\begin{equation}
u(Y)\leq \frac{N}{|C_{r}|}(\int_{C_{r}(Y)}u_{+}dV_{E}).
\end{equation}

\end{prop}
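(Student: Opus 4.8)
The plan is to read Proposition~\ref{prop Moser iteration equivalent to growth lemma} as the classical local maximum principle for subsolutions and to prove it by the De~Giorgi iteration fed by Lemma~\ref{lemma L1 estimate} (the ``Moser iteration $\Leftrightarrow$ growth lemma'' equivalence of \cite{FS} and \cite{LSU}). As the paper stresses, the only input is the energy inequality of Lemma~\ref{lem: parabolic inequality holds in the right way}, so the conic divisor never enters except through Claim~\ref{equ L2 norm of gradient of Berdtsson cutoff function vanishes}. First I would reduce: since $u\in L^\infty$ and $y\notin D$, the cylinders $C_\rho(Y)$ avoid $D$ for $\rho$ small, so $u$ is classical near $Y$ and $u(Y)$ is a genuine pointwise value. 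The asserted inequality is invariant under parabolic rescaling and under $u\mapsto cu$; hence I take $r=1$, replace $u$ by $u_\epsilon=G(u)$ of Definition~\ref{Def the cutoff trick by G} and let $\epsilon\to0$ at the end so as to assume $u\ge0$, and use the scale-invariant form of Lemma~\ref{lemma L1 estimate} (with the factor $\|u_+\|_{L^\infty}$ restored, or equivalently normalize $\sup_{C_1(Y)}u=1$). The goal becomes $u(Y)\le N\,|C_1(Y)|^{-1}\!\int_{C_1(Y)}u\,dV_E$ for a nonnegative subsolution on $C_1(Y)$.

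For the iteration I would work with the nested cylinders shrinking to $Y$: put $\rho_j=2^{-j}$, $C^j=C_{\rho_j}(Y)$, $\kappa_j=\tfrac k2-\tfrac k4 2^{-j}\uparrow\tfrac k2$ (with $k>0$ free), and $a_j=|\{u>\kappa_j\}\cap C^j|$. Since $\rho_j/2=\rho_{j+1}$, Lemma~\ref{lemma L1 estimate} applied on $C^j$ at level $\kappa_j$ bounds $\int_{C^{j+1}}(u-\kappa_j)_+\,dV_E$ by $N\rho_j^{-1}a_j^{1+1/(2n+2)}$ (times $\|u_+\|_{L^\infty}$ if unnormalized); combined with $(u-\kappa_j)_+\ge\kappa_{j+1}-\kappa_j$ on $\{u>\kappa_{j+1}\}$ this gives a recursion $a_{j+1}\le N k^{-1}4^{\,j}a_j^{1+1/(2n+2)}$. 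By the standard fast--geometric--decay lemma (\cite[Ch.~II, Lemma~5.6]{LSU}) one gets $a_j\to0$, hence $a_j/|C^j|\to0$, as soon as $a_0$ lies below a threshold of the form $\theta_0(n)(k/N)^{2n+2}$; since Chebyshev gives $a_0\le\tfrac4k\int_{C_1(Y)}u\,dV_E$, this forces a relation between $k$ and $\int_{C_1(Y)}u\,dV_E$. Whenever the threshold is met, $a_j/|C^j|\to0$ means $\{u>k/2\}$ has vanishing density at $Y$; continuity of $u$ at $Y$ then yields $u(Y)\le k/2$. Tracking the exponents, and running the same argument in every sub-cylinder $C_r(Z)\subseteq C_1(Y)$ while keeping the radius dependence of Lemma~\ref{lemma L1 estimate}, produces a preliminary estimate of the shape
\[
\sup_{C_{\tau r}(Z)}u\;\le\;\frac{N}{(1-\tau)^{a}}\Big(\frac1{|C_r(Z)|}\!\int_{C_r(Z)}u_+\,dV_E\Big)^{\theta}\big(\sup_{C_r(Z)}u_+\big)^{1-\theta},\qquad \theta=\theta(n)\in(0,1).
\]

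The main obstacle is that this is not yet the claim: the factor $\big(\sup_{C_r(Z)}u_+\big)^{1-\theta}$ on the right reflects the non-homogeneity of Lemma~\ref{lemma L1 estimate} and must be absorbed. I would do this by the usual ``filling the gap'' argument --- split off $\tfrac12\sup_{C_r(Z)}u_+$ by Young's inequality, then iterate along a geometric sequence of radii between $1/2$ and $1$ using the standard iteration lemma to swallow the sup-term together with the blow-up as the radii coalesce --- which upgrades the preliminary estimate to $\sup_{C_{1/2}(Y)}u\le N\,|C_1(Y)|^{-1}\!\int_{C_1(Y)}u_+\,dV_E$, hence in particular bounds $u(Y)$; undoing the rescaling, the normalization, and the $u_\epsilon$ limit then completes the proof. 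The delicate point throughout this last step is keeping every constant universal and independent of $\epsilon$ and of the scale. One could instead bypass the non-homogeneity altogether by running the homogeneous Moser iteration over the powers $(u^+)^p$, $p=1,\chi,\chi^2,\dots$ ($\chi=\tfrac{n+1}{n}$), using the energy inequalities of Lemma~\ref{lem: parabolic inequality holds in the right way} for these subsolutions together with the Sobolev inequality of Lemma~\ref{lem Sobolev}, followed by an $L^2\!\to\!L^1$ interpolation; I favour the De~Giorgi route only because it reuses Lemma~\ref{lemma L1 estimate} verbatim. In either route no property of the equation beyond the energy inequalities is used, which is exactly why the estimate is insensitive to the conic singularities.
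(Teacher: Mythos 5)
Your proof is correct, and it takes a genuinely different route from the paper's. The paper disposes of this Proposition in three lines by quoting Theorem~3.4 of \cite{FS}: one studies $M=\sup_{C_r} d^\gamma u$ where $d$ is the parabolic distance to $\partial_p C_r(Y)$, locates an (almost-)maximizing point $X_0$ with $x_0\notin D$ (here is the paper's only modification: the true maximum might sit on $D$, so one settles for a point where $d^\gamma u\ge M/2$), and then applies the Growth Lemma (Proposition~\ref{prop growth thm 1}) centred at $X_0$ to force $M$ to be controlled by the $L^1$ norm --- Safonov's $d^\gamma u$ device produces the homogeneous estimate in one stroke, with no interpolation and no ``filling-the-gap.'' You instead skip Proposition~\ref{prop growth thm 1} entirely and feed Lemma~\ref{lemma L1 estimate} directly into a De~Giorgi iteration on cylinders shrinking to $Y$, obtaining the nonhomogeneous interpolation bound $\sup\le C\,(\text{avg})^{\theta}(\sup)^{1-\theta}$ with $\theta=1/(2n+3)$, then absorb the $\sup$-term by Young's inequality together with the standard fill-the-gap iteration lemma. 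Both are correct; what the paper's route buys is brevity and the avoidance of the interpolation step, at the cost of the almost-maximal-point workaround required by $D$; what your route buys is that the entire argument is anchored at $Y\notin D$ from the start, so the singularity plays no role at all beyond its (already addressed) appearance inside Lemma~\ref{lemma L1 estimate}, and the method is the textbook one (LSU-style). Two small points worth making explicit if you flesh this out: in the fill-the-gap step the supremum over $C_{\tau r}(Y)$ should be read as the essential supremum (or the supremum over points off $D$), which is harmless since $u\in L^{\infty}$ and $D$ has measure zero; and the cylinders in this paper are top-anchored ($C_\rho(Y)=B_y(\rho)\times(s-\rho^2,s)$), so one must check, as you implicitly do, that $C_{(\sigma-\tau)r}(Z)\subset C_{\sigma r}(Y)$ for $Z\in C_{\tau r}(Y)$, which uses $\tau^2+(\sigma-\tau)^2\le\sigma^2$. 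Your parenthetical Moser-iteration alternative is also viable and homogeneous from the start, though it leans more heavily on Lemma~\ref{lem Sobolev}.
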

\begin{proof}{of Proposition \ref{prop Moser iteration equivalent to growth lemma}:} The proof is exactly as of Theorem 3.4 in \cite{FS}. The only thing worth mentioning is that we should deal with the singularity $D$. In \cite{FS}, they  consider the maximal point of $d^{\gamma}u$, where $\gamma =\frac{2n+2}{p}$ and $d$ is the parabolic distance to the the parabolic boundary of $C_{r}(Y)$. However, when singularity is present, $d^{\gamma}u$ might not attain maximum away from $D$. To overcome this, we  simply assume $u(Y)>0$, and use the fact that there exist an almost maximal point away from $D$. Namely, there exist $X_{0}=(x_{0},t_{0})$ such that $x_{0}\notin D$ and 
\begin{equation}\label{equ almost maximal point}
d^{\gamma}(X_{0})u(X_{0})\geq \frac{M}{2},\ M\triangleq \sup_{C_{r}}d^{\gamma}u 
\end{equation}
$(\textrm{we can assume}\ M>0\ \textrm{with out loss of generality})$. Then the rest of the proof is line by line as from line 13 to line -3 in Page 101 of \cite{FS}, except the $\mu_{1}$ in line 19 should correspond to $\beta_{1}=2^{-\gamma-2}$, because we have an additional $\frac{1}{2}$ in (\ref{equ almost maximal point}). \end{proof}

 \begin{rmk}  As mentioned in Remark 3.5 in \cite{FS}, this proof does NOT involve explicitly   the sub equation (\ref{equ sub heat equation of g}). Instead, it only requires the  Growth Lemma (\ref{prop growth thm 1}). Thus the conditions in (\ref{equ conditions of the metrics}) is not involved explicitly in  this proof. 
\end{rmk}

\begin{thm}(Slant Cylinder Theorem)\label{Thm Slant Cylinder}
Suppose $u$ is a weak sub solution to (\ref{equ sub heat equation of g}) in a slant cylinder $SC_{r}$. Suppose $u\leq 0$ in $B_{r}\times \{T_{0}\}$. Then 
\begin{equation}\label{equ in slant cylinder theorem}
u(Y)\leq (1-\lambda)\sup_{SC_{r}} u^{+}, 
\end{equation}
where $\lambda \in (0,1)$ depends on $n,\beta, \frac{r|y_{1}-y_{0}|}{T_{1}-T_{0}}\ (|l|), T_{1}-T_{0},$  and $K$. 
\end{thm}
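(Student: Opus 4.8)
The plan is to reduce the slant-cylinder estimate to the ordinary cylinder estimate already contained in Proposition \ref{prop Moser iteration equivalent to growth lemma} and the Growth Lemma (Proposition \ref{prop growth thm 1}), following the scheme of \cite{FS} (Theorem 4.2 there) but keeping track of the fact that the only input on the equation is the energy inequality of Lemma \ref{lem: parabolic inequality holds in the right way}, which holds verbatim in the conic setting. First I would normalize: by the parabolic rescaling and the space-/time-translation invariance recorded in Definition \ref{Def slant cylinder} (under which the parabolic slope $l$ is invariant), it suffices to treat $r=1$ and $\sup_{SC_1}u^+ = 1$; we may also assume $u(Y)>0$, else there is nothing to prove. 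Replacing $u$ by $u_\epsilon=G(u)$ as in Definition \ref{Def the cutoff trick by G} and letting $\epsilon\to 0$ at the end, I may assume $0\le u\le 1$ and $u$ is supported (space-wise, at each time) where it is positive, so that the energy estimates of Lemma \ref{lem: parabolic inequality holds in the right way} apply on the relevant slant subcylinders.

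The heart of the argument is a measure-decay statement: because $u\le 0$ on the bottom slice $B_r\times\{T_0\}$, the super-level set $\{u>1-\delta\}$ occupies only a small fraction of any slant subcylinder touching the bottom, and this fraction can be made arbitrarily small by iterating in the time direction. Concretely, I would chop $SC_1$ into $O((T_1-T_0))$ slant slabs of unit parabolic height stacked along the slope direction $y(t)$, and run the De Giorgi / Safonov iteration (the one used from the bottom of p.\ 99 through p.\ 100 of \cite{FS}, which uses only Lemma \ref{lemma L1 estimate}) slab by slab: the vanishing of $u$ at the initial time gives the base case, and Lemma \ref{lemma L1 estimate} propagates the $L^1$-smallness of $(u-A)_+$ upward. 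This yields, for a suitable small $\mu$ depending on $n,\beta,|l|,T_1-T_0,K$, the bound $|\{u>1-\delta\}\cap C_1(X)|\le \mu\,|C_1(X)|$ for every unit cylinder $C_1(X)$ with $X$ near $Y$ and $x\notin D$ (the last point is handled exactly as in Proposition \ref{prop growth thm 1}: choose a dyadic subcylinder missing $D$). Applying the Growth Lemma (Proposition \ref{prop growth thm 1}) to the sub-solution $u-(1-\delta)$ then forces $\sup_{C_{1/2}(X)}u\le 1-\tfrac{\delta}{2}$ on a cylinder around $Y$, and in particular $u(Y)\le 1-\tfrac{\delta}{2}=(1-\lambda)\sup_{SC_1}u^+$ with $\lambda=\delta/2$. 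Chaining finitely many such half-steps along the slope — the number of steps being controlled by $|l|$ and $T_1-T_0$ — connects $Y$ to the bottom slab and produces the asserted $\lambda\in(0,1)$ depending only on $n,\beta,|l|,T_1-T_0,K$.

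The main obstacle is the bookkeeping of the slant geometry: one must verify that the slant subcylinders used in the iteration genuinely nest inside $SC_1$ and that the number of iteration steps (hence the exponent in the resulting $\lambda$) is bounded in terms of $|l|$ and $T_1-T_0$ only, uniformly in $\epsilon$ and independently of how close $Y$ sits to $D$. Since the parabolic slope $l$ is preserved under all the admissible normalizations, and since every energy inequality invoked (Lemma \ref{lem: parabolic inequality holds in the right way}, Lemma \ref{lemma L1 estimate}, Proposition \ref{prop Moser iteration equivalent to growth lemma}) has already been shown to hold on slant cylinders minus $D$, this is exactly the computation in \cite{FS} with the extra factor $K$ from $\partial_t dV_g\le K\,dV_g$ absorbed into the constants; the conic singularity contributes nothing new because, as in Lemma \ref{lem: parabolic inequality holds in the right way}, Berndtsson's cutoff $\psi_\epsilon$ together with Claim \ref{equ L2 norm of gradient of Berdtsson cutoff function vanishes} lets us integrate by parts across $D$ in the limit. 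I would therefore present the proof as ``repeat the argument of Theorem 4.2 in \cite{FS} verbatim, replacing each use of the equation by Lemma \ref{lem: parabolic inequality holds in the right way} and each cylinder estimate by Proposition \ref{prop growth thm 1} and Proposition \ref{prop Moser iteration equivalent to growth lemma}.''
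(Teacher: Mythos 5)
Your proposal takes a genuinely different route from the paper, and the route has a gap at its core. The paper's proof of the Slant Cylinder Theorem does \emph{not} chain the Growth Lemma through a measure-decay argument. After normalizing so that $0\le u\le 1$ and $u|_{t=T_0}=0$, it performs the classical logarithm substitution $v=-\log(1-u_\epsilon)\ge 0$ (so $v|_{t=T_0}=0$), exploiting that $v$ is a sub-solution with the extra dissipation $-|\nabla_g v|^2$ on the right. Testing with $\eta=\underline{\eta}[x-y(t)]$ (a bump translated along the slope) in Lemma \ref{lem: parabolic inequality holds in the right way}, the term $-|\nabla_g v|^2$ absorbs every gradient contribution, including the one coming from $\partial_t\eta^2$ via $|\partial_t\eta^2|\le|l|\,|\nabla_E\eta^2|$. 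This yields the Gronwall inequality (\ref{equ 4 Thm slant cylinder}) with $I(T_0)=0$, hence $\int v\eta^2\,dV_g\le N$ uniformly in $t$ by Lemma \ref{lem bounding on I from the integral inequality}, and then Proposition \ref{prop Moser iteration equivalent to growth lemma} (applied to $v$, not $u$) upgrades this $L^1$ bound to $v(Y)\le N$, i.e.\ $u_\epsilon(Y)\le 1-e^{-N}$.

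The gap in your scheme is the ``measure-decay statement.'' You assert that $u\le 0$ on the bottom slice $B_r\times\{T_0\}$ forces $|\{u>1-\delta\}\cap C|$ to be small on slant subcylinders touching the bottom, ``propagated upward'' by the iteration on p.\ 99--100 of \cite{FS} and Lemma \ref{lemma L1 estimate}. But that iteration belongs to the proof of the Growth Lemma (Proposition \ref{prop growth thm 1}): it takes a \emph{given} measure-smallness of $\{u>0\}$ on a cylinder as hypothesis and, via Lemma \ref{lemma L1 estimate} and Chebyshev, produces pointwise decay. It does not manufacture cylinder measure-smallness from scratch, and in particular it cannot bootstrap the initial-slice condition (a set of parabolic measure zero) into a measure bound on a cylinder; without this your chaining has no base case. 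The logarithm trick is precisely what fills this hole: it converts the single-slice vanishing into $I(T_0)=0$ for an integral quantity that obeys Gronwall, and the $L^\infty$ estimate of Proposition \ref{prop Moser iteration equivalent to growth lemma} then closes the loop. Your treatment of the conic singularity (Berndtsson's cutoff, Claim \ref{equ L2 norm of gradient of Berdtsson cutoff function vanishes}) and the rescaling/translation bookkeeping are consistent with the paper, but without the logarithm substitution (or an equivalent barrier/heat-kernel argument) the proof as sketched does not close.
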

\begin{proof}[Proof of Theorem \ref{Thm Slant Cylinder}:] \textbf{The first  paragraph in  the proof of Proposition \ref{prop growth thm 1} also applies here}.  By translation and rescaling (see Definition \ref{Def slant cylinder}), without changing the parabolic slope, we can transform  $SC_{r}$ to a slant cylinder $SC_{1}$ with $r=1,\ T_{0}=0,\ T_{1}=T,\ y_{0}=\{0\}$, and $ y_{1}=y$. We then pull back $u$ and \textbf{the matrix of the metric $g$ on $SC_{r}$} to "$u$" (by abuse of notation) and $\widehat{g}$ on $SC_{1}$. Thus, $u$ satisfies in $SC_{1}$ the following.
\begin{equation}\label{equ 1 subsolution in slant cylinder Thm}
\frac{\partial u}{\partial t}-\Delta_{\widehat{g}}u\leq 0\ \textrm{in the sense of Definition \ref{Def weak solution}, and}
\end{equation}
\begin{equation}\label{equ quasi-isometric condition in slant cylinders}
\frac{g_{Euc}}{K}\leq \widehat{g}\leq Kg_{Euc}\ \textrm{in}\ SC_{1}.
\end{equation}

It suffices to prove (\ref{equ in slant cylinder theorem}) for $u_{\epsilon}$.  By rescaling, we can assume $u \leq 1$ and $sup_{SC_{1}}u=1$. Then $0\leq u_{\epsilon} \leq 1-\epsilon$ and $sup_{SC_{1}}u_{\epsilon}\geq 1-3\epsilon$. It sufficies to  derive an estimate for for $v=-\log(1-u_{\epsilon})$
which is independent of  $\epsilon$. Since $u_{\epsilon}$ satisfies (\ref{equ 1 subsolution in slant cylinder Thm}), $v$ satisfies 
\begin{equation}
\frac{\partial v}{\partial t}-\Delta_{\widehat{g}}v\leq -|\nabla_{\widehat{g}}v|^2
\end{equation}
in the sense of Definition \ref{Def weak solution}. Let $\underline{\eta}$  be the standard cut-off function in the Euclidean unit ball $B(1)$ which only depends on $|x|^{2}$. By (proof of) Lemma \ref{lem: parabolic inequality holds in the right way} [\textbf{replace the $0$ on the right hand side of} (\ref{equ sub heat equation of g}) by $-|\nabla_{\widehat{g}}v|^{2}$], using $u_{\epsilon}\geq 0, u_{\epsilon}|_{t=0}=0$, by abuse of notation with Lemma \ref{lem: parabolic inequality holds in the right way},   we  consider  $\eta=\underline{\eta}[x-y(t)]$ and  obtain (similarly to (\ref{equ weak solution identity})) 
\begin{eqnarray}\label{equ 1 Thm slant cylinder}
& &\int_{\C^{n}} v\eta^2 dV_{g}|^{t_{2}}_{t_{1}}+\int^{t_{2}}_{t_{1}}\int_{\C^{n}} <\nabla_{g}v,\nabla_{g}\eta^2 >dV_{g}ds +\int^{t_{2}}_{t_{1}}\int_{\C^{n}} |\nabla_{g}u|^2 \eta^2 dV_{g}ds \nonumber
\\&\leq & \int^{t_{2}}_{t_{1}}\int_{\C^{n}} v\frac{\partial \eta^2}{\partial t} dV_{g}ds+K\int^{t_{2}}_{t_{1}}\int_{\C^{n}}v \eta^2 dV_{g}ds.
\end{eqnarray}

We first estimate the term $\int^{t_{2}}_{t_{1}}\int_{\C^{n}} v\frac{\partial \eta^2}{\partial t} dV_{g}ds$.  It's  the same as in \cite{FS}. We note that $|\frac{\partial \eta^2}{\partial t}|\leq |l||\nabla_{E}\eta^2|$ (Definition \ref{Def slant cylinder}). Then 
\begin{equation}
|\int^{t_{2}}_{t_{1}}\int_{\C^{n}} v\frac{\partial \eta^2}{\partial t} dV_{g}ds|\leq |l|K^{2n}\int^{t_{2}}_{t_{1}}\int_{\C^{n}} v|\nabla_{E}\eta^2|dV_{E}ds.
\end{equation}
Using  line 14 to line 23 in page 103 of \cite{FS}, we obtain
\begin{equation}
\int_{\C^{n}} v|\nabla_{E}\eta^2|dV_{E}\leq N\int_{\C^{n}}( |v| +|\nabla_{E}v|)\eta^2 dV_{E}.
\end{equation}
Then  Cauchy-Schwartz inequality and the quasi-isometric condition (\ref{equ quasi-isometric condition in slant cylinders}) imply
\begin{eqnarray}\label{equ 2 Thm slant cylinder}
& &|\int^{t_{2}}_{t_{1}}\int_{\C^{n}} v\frac{\partial \eta^2}{\partial t} dV_{g}ds|\nonumber
\\&\leq & \frac{1}{100}\int^{t_{2}}_{t_{1}}\int_{\C^{n}} |\nabla_{g}v|^{2}\eta^2 dV_{g}ds+N+N\int^{t_{2}}_{t_{1}}\int_{\C^{n}} v\eta^2 dV_{g}ds.
\end{eqnarray}
By the same reason we have 
\begin{equation}\label{equ 3 Thm slant cylinder}
|\int^{t_{2}}_{t_{1}}\int_{\C^{n}} <\nabla_{g}v,\nabla_{g}\eta^2 >dV_{g}ds|\leq  \frac{1}{100}\int^{t_{2}}_{t_{1}}\int_{\C^{n}} |\nabla_{g}v|^{2}\eta^2 dV_{g}ds+N.
\end{equation}
Then (\ref{equ 1 Thm slant cylinder}), (\ref{equ 2 Thm slant cylinder}), and (\ref{equ 3 Thm slant cylinder}) imply
\begin{equation}\label{equ 4 Thm slant cylinder}
\int_{\C^{n}} v\eta^2 dV_{g}|^{t_{2}}_{t_{1}}
\leq  N+N\int^{t_{2}}_{t_{1}}\int_{\C^{n}}v \eta^2 dV_{g}ds.
\end{equation}
Denote $\int_{\C^{n}} v\eta^2 dV_{g}|_{t}=I(t)$, since $I(0)=0$, (\ref{equ 4 Thm slant cylinder}) implies 
$I(t)$ satisfy the assumption in Lemma \ref{lem bounding on I from the integral inequality}. Hence  Lemma \ref{lem bounding on I from the integral inequality} implies  $I(t)\leq N$ for all $t\in [0,T]$. Then  Proposition \ref{prop Moser iteration equivalent to growth lemma} implies $v(Y)\leq N$. Hence  for some $\lambda$ (as in Theorem \ref{Thm Slant Cylinder}) which is  independent of $\epsilon$, $u_{\epsilon}(Y)\leq 1-2\lambda\leq (1-\lambda)\sup_{SC_{1}}u_{\epsilon}$ when $\epsilon$ is small enough. Let $\epsilon \rightarrow 0$,  the proof of (\ref{equ in slant cylinder theorem}) is complete.  Again, \textbf{nothing in this proof involves more than the energy estimates of the sub-solutions}.
\end{proof}

\begin{lem}\label{lem bounding on I from the integral inequality}  Suppose $I(t),\ t\in [T_{0},T_{1}]$ is an everywhere defined $L^{\infty}$ function.  
Suppose $I(t)\geq 0$ for all $t$, $I(T_{0})=0$,  and  \begin{equation}
I(t)\leq I(t_{1})+N_{1}\int^{t_{2}}_{t_{1}}I(s)ds+N_{2},\ \textrm{for all}\ t_{1},\ t_{2}\ \textrm{and}\ t\in [t_{1},t_{2}].
\end{equation}
Then there exists $N$ depending on $N_{1}$, $N_{2}$, and $T_{1}-T_{0}$ such that $I(t)\leq N$.

\end{lem}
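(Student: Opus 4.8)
The plan is to reduce the stated hypothesis to a form where a Gronwall-type argument applies. First I would fix $t_1 = T_0$, so that the hypothesis with $t_1 = T_0$, $t_2 = t$, and the point $t$ itself gives
\[
I(t) \leq I(T_0) + N_1\int_{T_0}^{t} I(s)\, ds + N_2 = N_1\int_{T_0}^{t} I(s)\, ds + N_2,
\]
using $I(T_0) = 0$. This is exactly the integral inequality to which the classical Gronwall lemma applies: since $I$ is a bounded (hence integrable) nonnegative function on $[T_0, T_1]$, setting $J(t) = \int_{T_0}^t I(s)\,ds$ we get $J'(t) = I(t) \leq N_1 J(t) + N_2$ almost everywhere, so $\frac{d}{dt}\bigl(e^{-N_1 t} J(t)\bigr) \leq N_2 e^{-N_1 t}$, and integrating from $T_0$ gives $J(t) \leq \frac{N_2}{N_1}\bigl(e^{N_1(t - T_0)} - 1\bigr)$ (with the obvious interpretation $J(t)\le N_2(t-T_0)$ if $N_1=0$). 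Feeding this back into the displayed inequality yields
\[
I(t) \leq N_1 J(t) + N_2 \leq N_2\, e^{N_1(t - T_0)} \leq N_2\, e^{N_1(T_1 - T_0)} \triangleq N,
\]
which is the desired bound depending only on $N_1$, $N_2$, and $T_1 - T_0$.

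Two technical points deserve a remark. Because $I$ is only assumed to be $L^\infty$ and everywhere defined (not continuous), one should phrase the Gronwall step measure-theoretically: $J$ is absolutely continuous with $J' = I$ a.e., which is all that is needed for the integrating-factor computation, and the final pointwise bound on $I(t)$ then holds for every $t$ since the displayed inequality $I(t) \le N_1 J(t) + N_2$ holds for every $t$ (it came directly from the hypothesis, not from a.e. considerations). Alternatively, one can avoid differentiation entirely and argue by iteration: substituting the bound $I(s) \le N_1 J(s) + N_2$ repeatedly into $J(t) = \int_{T_0}^t I(s)\,ds$ produces the partial sums of the exponential series $\frac{N_2}{N_1}\sum_{k\ge 1} \frac{(N_1(t-T_0))^k}{k!}$, which converge to the same bound; this version uses only the integrability of $I$ and is perhaps cleaner in the present rough setting.

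I do not expect any genuine obstacle here: the full-strength hypothesis (allowing arbitrary $t_1 \le t \le t_2$) is considerably stronger than what the proof uses — only the special case $t_1 = T_0$ is needed — so the lemma is essentially a packaging of Gronwall's inequality adapted to merely bounded $I$. The only thing to be careful about is not to invoke continuity of $I$; everything should be stated in terms of the absolutely continuous antiderivative $J$.
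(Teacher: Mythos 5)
Your proof is correct, but it takes a genuinely different route from the paper's. You observe that the special case $t_1 = T_0$, $t_2 = t$ of the hypothesis (evaluated at the endpoint $t$ itself) reduces to the classical Gronwall hypothesis $I(t) \le N_1\int_{T_0}^{t} I(s)\,ds + N_2$, and you then run the standard integrating-factor (or iteration) argument, carefully noting that since $I$ is merely $L^\infty$ one should work with the absolutely continuous antiderivative $J$. The paper instead avoids Gronwall entirely: it partitions $[T_0,T_1]$ into $k_0 = (T_1-T_0)/a$ subintervals of length $a \le \tfrac{1}{100 N_1}$, uses the hypothesis on each subinterval $[T_0+ka, T_0+(k+1)a]$ to bound $\int_{t_1}^{t_2} I\,ds \le a\cdot \sup I$, absorbs the resulting $\tfrac12\sup$ into the left-hand side to get $\sup_{[T_0+ka,\,T_0+(k+1)a]} I \le 2 I(T_0+ka) + 2N_2$, and then iterates from $I(T_0)=0$. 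The paper's version is purely algebraic (no differentiation, no measure-theoretic care about $J'=I$ a.e.) at the cost of an exponential-in-$k_0$ constant; your version is the textbook Gronwall bound and is arguably cleaner, at the modest cost of invoking absolute continuity of the primitive. Both yield an $N$ depending only on $N_1$, $N_2$, $T_1 - T_0$, as required.
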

\begin{proof} Choose $a$ such that  $a\leq \frac{1}{100N_{1}}$ and $\frac{T_{1}-T_{0}}{a}=k_{0}$ is an integer. Then for $k\leq k_{0}-1$, we deduce $\max_{ka\leq t\leq (k+1)a}I(t)\leq \frac{1}{2}\max_{ka\leq t\leq (k+1)a}I(t)+N_{2}+I(ka),\ \textrm{then}$
\begin{equation}
\max_{ka\leq t\leq (k+1)a}I(t)\leq 2N_{2}+2I(ka).
\end{equation}
Since $I(T_{0})=0$,  the proof is complete by induction.
\end{proof}

The short proof of Theorem 4.2 in \cite{FS} (only involving Theorem \ref{Thm Slant Cylinder}) gives
\begin{prop}\label{prop growth thm 2} Suppose $u$ is a weak sub solution to (\ref{equ sub heat equation of g}) in a  cylinder $C_{r}(Y)$. Suppose 
$u\leq 0$ on    $B_{\rho}(z)\times \{\tau\}$, where   $s-r^{2}\leq \tau<s-\frac{r^{2}}{4}-\rho^2$. Then 
\begin{equation}
\sup_{C_{\frac{r}{2}}}u\leq (1-\lambda)\sup_{C_{r}}u,\ \textrm{where}\ \lambda \in (0,1)\ \textrm{depends on}\ n,\ \beta,\ \frac{\rho}{r},\ K.
\end{equation}
\end{prop}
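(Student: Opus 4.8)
The plan is to deduce Proposition \ref{prop growth thm 2} directly from the Slant Cylinder Theorem (Theorem \ref{Thm Slant Cylinder}), exactly in the way Theorem 4.2 is obtained from Theorem 4.1 in \cite{FS}; the sub-equation (\ref{equ sub heat equation of g}) enters only through that theorem. First I would dispose of the trivial case $\sup_{C_r(Y)}u\le 0$ (nothing to prove), so assume $M:=\sup_{C_r(Y)}u>0$, whence $\sup_{C_r(Y)}u=\sup_{C_r(Y)}u^{+}=M$. Then, using the parabolic rescaling, spatial translation and time translation invariance of the sub-equation and of the quasi-isometry class $\frac1Kg_E\le g\le Kg_E$, I would normalize to $r=1$, $Y=(0,0)$, so that $C_1=B_1(0)\times(-1,0)$, the slice is $B_\rho(z)\times\{\tau\}$ with $-1\le\tau<-\tfrac14-\rho^2$, and (as is built into the hypotheses in the intended applications, cf. \cite{FS}) $\rho$ is small relative to $1$ and $B_\rho(z)\subset B_{1/2}(0)$.

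Next, for an arbitrary $Y'=(y',t')\in C_{1/2}(Y)$ with $y'\notin D$ --- such points are dense, so bounding $u$ there suffices --- I would form the slant cylinder $SC_\rho=SC_\rho(z,y',\tau,t')$ of Definition \ref{Def slant cylinder}, whose moving ball is $B_\rho\!\big(z+\frac{(t-\tau)(y'-z)}{t'-\tau}\big)$, $t\in(\tau,t']$. Its bottom face is precisely $B_\rho(z)\times\{\tau\}$, where $u\le 0$, and its top centre is $Y'$. I would then check $SC_\rho\subset C_1(Y)$: the time interval $(\tau,t']$ sits in $(-1,0)$ since $\tau\ge-1$ and $t'<0$, and each moving ball is centred on the segment $[z,y']\subset\overline{B_{1/2}(0)}$, so it lies in $B_1(0)$. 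The decisive point is that the parabolic slope stays bounded: $|l|=\rho\,|y'-z|/(t'-\tau)$, and the hypothesis $\tau<-\tfrac14-\rho^2$ combined with $t'>-\tfrac14$ forces $t'-\tau>\rho^2$, so $|l|<1/\rho$; likewise $t'-\tau\le 1$. Hence both quantities on which the constant of Theorem \ref{Thm Slant Cylinder} depends are controlled in terms of $n$, $\beta$, $K$ and $\rho/r$ alone, uniformly over $Y'$.

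Applying Theorem \ref{Thm Slant Cylinder} to $u$ on $SC_\rho$ (with $u\le 0$ on its bottom face) then gives $u(Y')\le(1-\lambda)\sup_{SC_\rho}u^{+}\le(1-\lambda)\sup_{C_1(Y)}u^{+}=(1-\lambda)M$ with $\lambda=\lambda(n,\beta,|l|,t'-\tau,K)\ge\lambda_0(n,\beta,\rho/r,K)>0$. Taking the supremum over all such $Y'$ yields $\sup_{C_{1/2}(Y)}u\le(1-\lambda_0)\sup_{C_1(Y)}u$, and undoing the rescaling proves the proposition. The one step requiring genuine care is exactly the uniformity of $\lambda_0$: the constant in the Slant Cylinder Theorem is not scale-invariant (it involves $T_1-T_0$ and $|l|$ separately, not just $|l|$), so one must first rescale the ambient cylinder to unit size and only afterwards observe that, for every target $Y'\in C_{1/2}$, the connecting slant cylinder has time-length $\le 1$ and slope $<1/\rho$ --- which is precisely why the range $s-r^2\le\tau<s-\tfrac{r^2}{4}-\rho^2$ is imposed, it being what prevents the slope from degenerating as $t'\downarrow s-\tfrac{r^2}{4}$.
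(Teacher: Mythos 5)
Your proposal is correct and reproduces essentially the route the paper intends: the paper simply invokes ``the short proof of Theorem~4.2 in \cite{FS}'', which is precisely the construction you spell out — join the cold slice $B_\rho(z)\times\{\tau\}$ to an arbitrary $Y'\in C_{r/2}$ by a slant cylinder, note that the constraint $s-r^2\le\tau<s-\tfrac{r^2}{4}-\rho^2$ caps both the parabolic slope and the (rescaled) time length, and apply Theorem~\ref{Thm Slant Cylinder}. The only caveat is the implicit containment assumption $B_\rho(z)\subset B_{r/2}(y)$ (equivalently $\rho\lesssim r$ and $z$ not too far from $y$), which is needed to keep $SC_\rho\subset C_r(Y)$; you flag this honestly, and it is indeed built into the way the proposition is used in the proof of Theorem~\ref{thm growth theorem}.
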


\begin{thm}(Main Growth Theorem)\label{thm growth theorem} Suppose $u$ is a weak sub solution to (\ref{equ sub heat equation of g}) in a  cylinder $C_{2r}(Y)$. Suppose 
\begin{equation}
\frac{|\{u>0\}\cap C_{r}(y,s-3r^{2})|}{|C_{r}(y,s-3r^{2})|}\leq \frac{1}{2}.
\end{equation}
Then 
\begin{equation}
\sup_{C_{r}}u\leq (1-\lambda)\sup_{C_{2r}}u^{+}, \textrm{where}\ \lambda \in (0,1)\ \textrm{depends on}\
n,\beta,K.
\end{equation}
\end{thm}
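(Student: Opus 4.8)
The plan is to run the proof of Theorem~5.3 of \cite{FS} verbatim; the one thing needing verification is that its sole analytic input is the energy inequality, which by Lemma~\ref{lem: parabolic inequality holds in the right way} survives both the time-dependence of $a_{0}$ and the conic singularity along $D$ (the latter invisible to every measure-theoretic step, $D$ having space-codimension $2$). Since every relevant use of that inequality is already packaged into the Growth Lemma (Proposition~\ref{prop growth thm 1}) and Proposition~\ref{prop growth thm 2}, what is left is purely geometric. First I would reduce, by the rescaling and translation invariance of the sub-equation (as in the first paragraph of the proof of Proposition~\ref{prop growth thm 1}), to $r=1$, $Y=(0,0)$, so $C_{2r}(Y)=C_{2}(0)=B_{2}\times(-4,0)$, the hypothesis set is $C^{0}_{1}=B_{1}\times(-4,-3)$, and the target is $C_{1}(0)=B_{1}\times(-1,0)$. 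If $\sup_{C_{2}(0)}u^{+}=0$ there is nothing to prove, so normalize $\sup_{C_{2}(0)}u^{+}=1$; then $u\le 1$ on $C_{2}(0)$ and the goal is $\sup_{C_{1}(0)}u\le 1-\lambda$.

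The crux is to turn the crude density bound $|\{u\le 0\}\cap C^{0}_{1}|\ge\tfrac12\,|C^{0}_{1}|$ into a genuine one. I would perform the parabolic Calder\'on--Zygmund / Krylov--Safonov ``growing ink-spots'' procedure on $C^{0}_{1}$ relative to the set $\{u>0\}$, at the smallness threshold $\mu_{2}$ of Proposition~\ref{prop growth thm 1}, exactly as in \cite{FS}: on each stopping sub-cube the Growth Lemma converts ``$\{u>0\}$ has density $\le\mu_{2}$'' into ``$u\le\tfrac12$ on the concentric half-cube'', and, the ``ink'' $\{u\le 0\}$ already filling half of the base cube $C^{0}_{1}$, only a bounded (in terms of $\mu_{2}$, hence of $n,\beta,K$) number of iterations is needed, their upward drift staying well inside the $3r^{2}$ of temporal room that separates $C^{0}_{1}$ from $C_{1}(0)$. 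The output is a space-time point $X_{0}=(z_{0},\tau_{0})$ with $z_{0}\in B_{1}\setminus D$ and $\tau_{0}\in(-4,-2)$, and a radius $\rho_{0}=\rho_{0}(n,\beta,K)\in(0,1)$ with $B_{\rho_{0}}(z_{0})\cap D=\emptyset$, such that
\[
u\ \le\ \tfrac12\qquad\text{on}\qquad C_{\rho_{0}}(X_{0}).
\]
(It is essential that the hypothesis is imposed on the \emph{far-back} cylinder $C^{0}_{1}$ and not on $C_{1}(0)$: a sub-solution may take values close to $\sup u^{+}$ throughout $C_{1}(0)$ with a negligible positivity set there, so the smallness must be fed in early and then transported forward.) Showing that this combinatorics really yields a sub-cylinder of \emph{definite} size with constants depending only on $n,\beta,K$, and that excising $D$ changes nothing, is where the genuine work is; this is also the only place beyond Proposition~\ref{prop growth thm 1} itself where Lemma~\ref{lem: parabolic inequality holds in the right way} is used. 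I expect it to be the main obstacle.

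Finally I would transport the drop up to $C_{1}(0)$. Set $w=2\bigl(u-\tfrac12\bigr)$: a weak sub-solution of the same equation, $w\le 1$ on $C_{2}(0)$ and $w\le 0$ on $C_{\rho_{0}}(X_{0})$, hence (by continuity off $D$, which misses $B_{\rho_{0}}(z_{0})$) on the disc $B_{\rho_{0}}(z_{0})\times\{\tau_{0}\}$. For any $Z=(z,t)\in C_{1}(0)$ we have $z\in B_{1}$ and $t\in(-1,0)$, so $t-\tau_{0}\in(1,4)$ and $\rho_{0}|z-z_{0}|/(t-\tau_{0})<2$; thus the slant cylinder of radius $\rho_{0}$ with bottom disc $B_{\rho_{0}}(z_{0})\times\{\tau_{0}\}$ and top vertex $Z$ lies in $C_{2}(0)$ and has parabolic slope and duration bounded in terms of $n,\beta,K$. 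Theorem~\ref{Thm Slant Cylinder} (or equivalently Proposition~\ref{prop growth thm 2}) then gives $w(Z)\le(1-\lambda_{*})\sup w^{+}\le 1-\lambda_{*}$ with a uniform $\lambda_{*}=\lambda_{*}(n,\beta,K)\in(0,1)$. Taking the supremum over $Z\in C_{1}(0)$ yields $\sup_{C_{1}(0)}w\le 1-\lambda_{*}$, i.e. $\sup_{C_{1}(0)}u\le\tfrac12+\tfrac12(1-\lambda_{*})=\bigl(1-\tfrac{\lambda_{*}}{2}\bigr)\sup_{C_{2}(0)}u^{+}$; undoing the rescaling gives the theorem with $\lambda=\lambda_{*}/2$. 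This step and the reductions are routine --- the only genuine difficulty is the ink-spots step.
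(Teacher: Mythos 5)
Your proposal follows the same strategy the paper takes: establish Proposition~\ref{prop growth thm 1} and Proposition~\ref{prop growth thm 2} in the conic setting, then run the proof of Theorem~5.3 of \cite{FS} unchanged, since the remaining work is a covering argument of purely measure-theoretic character. The paper's own proof is much terser than yours --- it states that ``except measure theory which does not involve the sub-equation, the proof of Theorem~5.3 in \cite{FS} only depends on'' these two propositions and that both are invoked in case~(a) on page~109 of \cite{FS}, and then simply declares that the argument goes through. You attempt to reconstruct that covering argument (ink-spots plus a slant-cylinder transport); the broad shape is plausible, and your slant-cylinder transport step, with its bookkeeping on slope and duration and the final $w=2(u-\tfrac12)$ renormalization giving $\lambda=\lambda_{*}/2$, is sound.

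Two of your stated concerns, however, are misdirected. First, you flag ``excising $D$'' as a potential obstacle in the ink-spots step and insist the resulting ball $B_{\rho_{0}}(z_{0})$ must avoid $D$. That is not needed: $D$ has space-codimension~$2$ and hence parabolic measure zero, so it is invisible to every density computation; and neither Proposition~\ref{prop growth thm 1} nor Proposition~\ref{prop growth thm 2} requires the cylinder or ball to avoid $D$ --- the hypothesis $u\le 0$ on $B_{\rho}(z)\times\{\tau\}$ in Proposition~\ref{prop growth thm 2} is a statement about $u$ where it is defined, and the conclusions are pointwise estimates off $D$, which (with boundedness) is all one needs. Second, you claim that the ink-spots combinatorics is ``the only place beyond Proposition~\ref{prop growth thm 1} itself where Lemma~\ref{lem: parabolic inequality holds in the right way} is used.'' This is the opposite of the paper's point: the covering/measure-theoretic part of the argument does \emph{not} touch the sub-equation or the energy inequality at all; the energy inequality enters only through Propositions~\ref{prop growth thm 1} and~\ref{prop growth thm 2}, which are what you then iterate. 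Recognizing that the combinatorics is equation-free is precisely what licenses transplanting it verbatim from \cite{FS}.
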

\begin{proof}[Proof of Theorem \ref{thm growth theorem}:] Instead of  directly quoting  the  work in \cite{FS}, we would like to make the crucial point: 

 \textit{Except measure theory which does not involve the sub equation (\ref{equ sub heat equation of g}), the proof of Theorem 5.3 in \cite{FS} only depends on the fact that Proposition \ref{prop growth thm 1} (Theorem 3.3 in \cite{FS}) and \ref{prop growth thm 2} (Theorem 4.2 in \cite{FS}) hold true for any sub solution (with suitable conditions on initial value or level sets) in any scale.}

Actually both propositions are applied in case (a) in page 109 of \cite{FS}. 

Thus, using Proposition \ref{prop growth thm 1} (in the position of Theorem 3.3 in \cite{FS}) and \ref{prop growth thm 2} (in the position of Theorem 4.2 in \cite{FS}), the proof of Theorem 5.3 in \cite{FS} goes through for Theorem \ref{thm growth theorem}.
\end{proof}

\small
 
 Yuanqi Wang, Department of Mathematics, Stony Brook University, Stony Brook, NY, USA;\ \ ywang@scgp.stonybrook.edu.
\end{document}